\newtheorem{theorem}{Theorem}[section]
\newtheorem{lemma}[theorem]{Lemma}
\newtheorem{proposition}[theorem]{Proposition}
\newtheorem{corollary}[theorem]{Corollary}
\theoremstyle{definition}
\newtheorem{definition}[theorem]{Definition}
\theoremstyle{remark}
\newtheorem{remark}[theorem]{Remark}
\numberwithin{equation}{section}
\newcommand{\R}{{\mathbb R}}
\newcommand{\N}{{\mathbb N}}
\newcommand{\cG}{{\mathcal G}}
\newcommand{\cH}{{\mathcal H}}
\newcommand{\supp}{\operatorname{supp}}
\providecommand{\e}{\varepsilon}
\providecommand{\f}{\varphi}
\providecommand{\x}{\xi}
\title{The VC-dimension and point configurations in ${\Bbb R}^d$}
\author{Alex Iosevich}
\address{Mathematics Department, University of Rochester, Rochester, NY}
\email{alex.iosevich@rochester.edu}
\author{Akos Magyar} 
\address{Mathematics Department, University of Georgia, Athens, GA}
\email{amagyar@uga.edu}
\author{Alex McDonald} 
\address{Mathematics Department, Kennesaw State University, Marietta, GA}
\email{amcdon79@kennesaw.edu}
\author{Brian McDonald}
\address{Mathematics Department, University of Georgia, Athens, GA}
\email{brian.mcdonald@uga.edu}
\thanks{The first listed author was supported in part by the National Science Foundation under grants no. HDR TRIPODS - 1934962, NSF DMS 2154232, and NSF DMS 2506858 during work on this paper.}
\begin{document}

\begin{abstract} Given a set $X$ and a collection ${\mathcal H}$ of functions from $X$ to $\{0,1\}$, the VC-dimension measures the complexity of the hypothesis class $\mathcal{H}$ in the context of PAC learning.  In recent years, this has been connected to geometric configuration problems in vector spaces over finite fields.  In particular, it is easy to show that the VC-dimension of the set of spheres of a given radius in $\mathbb{F}_q^d$ is equal to $d+1$, since this is how many points generically determine a sphere.  It is known, due to \cite{small}, that for $E\subseteq \mathbb{F}_q^d$, $|E|\geq q^{d-\frac{1}{d-1}}$, the set of spheres centered at points in $E$, and intersected with the set $E$, has VC-dimension either $d$ or $d+1$.
\\

\noindent In this paper, we study a similar question over Euclidean space.  We find an explicit dimensional threshold $s_d<d$ so that whenever $E\subseteq \mathbb{R}^d$, $d\geq 3$, and the Hausdorff dimension of $E$ is at least $s_d$, it follows that there exists an interval $I$ such that for any $t\in I$, the VC-dimension of the set of spheres of radius $t$ centered at points in $E$, and intersected with $E$, is at least $3$.  In the process of proving this theorem, we also provide the first explicit dimensional threshold for a set $E\subseteq \mathbb{R}^3$ to contain a $4$-cycle, i.e. $x_1,x_2,x_3,x_4\in E$ satisfying
$$
|x_1-x_2|=|x_2-x_3|=|x_3-x_4|=|x_4-x_1|
$$

\end{abstract}

\maketitle

\section{Introduction} 

\vskip.125in 

The purpose of this paper is to study the Vapnik-Chervonenkis dimension in the context of a naturally arising family of functions on compact subsets of ${\Bbb R}^d$. Let us begin by recalling some definitions and basic results (see e.g. \cite{DS14}, Chapter 6). 

\begin{definition} \label{shatteringdef} Let $X$ be a set and ${\mathcal H}$ a collection of functions from $X$ to $\{0,1\}$. We say that ${\mathcal H}$ shatters a finite set $C \subset X$ if the restriction of ${\mathcal H}$ to $C$ yields every possible function from $C$ to $\{0,1\}$. \end{definition} 

\vskip.125in 

\begin{definition} \label{vcdimdef} Let $X$ and ${\mathcal H}$ be as above. We say that a non-negative integer $n$ is the VC-dimension of ${\mathcal H}$ if there exists a set $C \subset X$ of size $n$ that is shattered by ${\mathcal H}$, and no subset of $X$ of size $n+1$ is shattered by ${\mathcal H}$. \end{definition}

We are going to work with a class of functions ${\mathcal H}^d_t$, where $t \not=0$. Let $X={\Bbb R}^d$, and define 
\begin{equation} \label{functionclassdef} {\mathcal H}_t^d=\{h_y: y \in {\Bbb R}^d \}, \end{equation} where $y \in {\Bbb R}^d$, and $h_y(x)=1$ if $|x-y|=t$, and $0$ otherwise. Let ${\mathcal H}_t^d(E)$ be defined the same way, but with respect to a compact set $E \subset {\Bbb R}^d$ i.e 
$$ {\mathcal H}^d_t(E)=\{h_y: y \in E\},$$ where $h_y(x)=1$ if $|x-y|=t$ ($x \in E$), and $0$ otherwise.  The problem of determining the VC-dimension of $\mathcal{H}_t^d(E)$ amounts to determining whether the set $E$ contains certain finite point configurations.  This question is the subject of much interest in harmonic analysis and geometric measure theory.  The prototypical example of such a problem is the Falconer distance problem, which asks how large the Hausdorff dimension of a compact set $E\subset \R^d$ must be to ensure that the distance set
\[
\Delta(E):=\{|x-y|:x,y\in E\}
\]
has positive Lebesgue measure.  This problem was originated by Falconer \cite{Falc85}, who proved that $\dim E>\frac{d+1}{2}$ was sufficient and conjectured that $\frac{d}{2}$ was the optimal threshold.  The Falconer problem remains open, with the best known results due to Guth, Ou, Wang, and the first author \cite{GIOW} when $d=2$ and to Du, Ou, Ren, Zhang \cite{DORZ} when $d\geq 3$.  The techniques used to study the Falconer problem have proved useful in the study of the existence and abundance of more complex patterns in fractal sets; see, for example, \cite{CIMP,GM22,GILP,GIP17,GIT21,GIT24,GIT25,Areas,PRA23}.  Our main result is the following. 

\begin{theorem} \label{main} Let $d\geq 3$, and define
\[
s_d=\frac{29d+2-\sqrt{81d^2+116d-156}}{20}.
\]
If $E \subset {\Bbb R}^d$ is compact and $\dim E>s_d$, then then there exists a non-degenerate interval $I$ such that for every $t \in I$, the VC-dimension of ${\mathcal H}^d_t(E)$ is at least $3$. 
\end{theorem}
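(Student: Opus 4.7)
The plan is to translate the VC-dimension lower bound into the existence of a specific four-point configuration in $E$ and establish it by Fourier-analytic counting on a Frostman measure. Shattering a triple $\{x_1,x_2,x_3\}\subset E$ by $\mathcal{H}_t^d(E)$ requires, for each $S\subseteq\{1,2,3\}$, a center $y_S\in E$ with $|y_S-x_i|=t$ if and only if $i\in S$. The empty and singleton cases are automatic once we avoid the three measure-zero spheres; the pair cases demand isoceles-triangle configurations; the binding case is $S=\{1,2,3\}$, which asks for an ``apex'' $y\in E$ at distance $t$ from each of $x_1,x_2,x_3$, i.e.\ a tripod $K_{1,3}$ with arm length $t$ inside $E$. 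Producing such a tripod for every $t$ in an interval is the heart of the matter and sets the dimensional threshold.

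Fix $s_d<s<\dim E$ and let $\mu$ be a Frostman probability measure on $E$ with $\mu(B(x,r))\lesssim r^s$. For $\delta>0$, let $\sigma_t^\delta$ be a smooth approximation to the surface measure on $\{|x|=t\}$, and let $S_t^\delta\mu=\mu*\sigma_t^\delta$ be the thickened spherical averaging operator. Consider the tripod-counting functional
\[
T_\delta(t)=\int \bigl(S_t^\delta\mu(y)\bigr)^3\,d\mu(y),
\]
which, after the natural normalization, is the $\mu^{\otimes 4}$-mass of those $(x_1,x_2,x_3,y)\in E^4$ with $\bigl||y-x_i|-t\bigr|<\delta$ for $i=1,2,3$. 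The goal is to show $T_\delta(t)\gtrsim 1$ uniformly in $\delta$ for $t$ in some non-degenerate interval $I$.

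The strategy is: (a) bound $\int_I T_\delta(t)\,dt$ from below using Plancherel and the spherical decay $|\widehat{\sigma_t}(\xi)|\lesssim(1+|\xi|)^{-(d-1)/2}$; (b) invoke the Frostman energy estimate $\int|\hat\mu(\xi)|^2|\xi|^{s-d}\,d\xi\lesssim I_s(\mu)<\infty$ in tandem with $L^p$-improving bounds for the spherical averaging operator; (c) interpolate to control the third moment $\int(S_t^\delta\mu)^3\,d\mu$ and extract a lower bound uniform in $\delta$ and in $t\in I$. The threshold $s_d$ arises at the sharp point of this interpolation, and the surd in the formula for $s_d$ reflects the optimum of a one-parameter family of interpolated inequalities. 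Passing $\delta\to 0$ then produces a genuine tripod in $E$ for every $t\in I$.

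The remaining steps are comparatively easy. The pair configurations follow from an analogous bound on $\int(S_t^\delta\mu)^2\,d\mu$, whose threshold is of Falconer type and strictly smaller than $s_d$. A genericity argument removes degenerate tripods (collinear $x_i$'s, or coincidences among apex, pair, and singleton centers). Combined with the trivial empty and singleton cases, this yields the eight centers required for VC-dimension at least~$3$. The main obstacle is step (c): unlike the $L^2$ analysis that underlies Falconer-type results, the third moment $\int(S_t^\delta\mu)^3\,d\mu$ does not yield directly to Plancherel and requires combining energy estimates with sharp mixed-norm bounds on spherical means. As a byproduct, applying Cauchy--Schwarz with the apex $y$ as a pivot converts the tripod integral into an integral over $4$-cycles in $E$, which should underlie the $\mathbb{R}^3$ $4$-cycle result advertised in the abstract via the same machinery at a more favorable dimension count.
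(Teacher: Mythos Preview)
Your proposal has a genuine gap: you have misidentified the configuration that drives the threshold. The tripod $K_{1,3}$ is not the binding case. Once one restricts (via Proposition~\ref{IT19}) to a subset of $E$ on which $\sigma_t^\e*\mu\approx 1$, the tripod functional $\int(S_t^\delta\mu)^3\,d\mu$ is trivially $\approx 1$ for any $s>\tfrac{d+1}{2}$, and the same holds for the integrals governing singletons and pair-centers taken individually. The real difficulty is that shattering $\{x_1,x_2,x_3\}$ requires the apex $y_{123}$ \emph{and} the three pair-centers $y_{12},y_{13},y_{23}$ to exist for the \emph{same} triple; your separate treatment of tripod, pairs, and singletons provides no mechanism to align them on a common $(x_1,x_2,x_3)$. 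One is forced to analyze the configuration integral for the full seven-vertex, nine-edge graph $G$ (the $x_i$ together with $y_{123},y_{12},y_{13},y_{23}$), and it is the \emph{upper} bound on $\Lambda_{G,t,c}^\e\mu$, uniform in $\e$, that both permits passage to a limiting configuration measure and determines $s_d$---not a lower bound on a third moment.

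In the paper this upper bound is obtained by telescoping $T_G\mu_{2\e}-T_G\mu_\e$ and, exploiting the fact that $G$ is two copies of a five-vertex subgraph $H$ glued along three common vertices, controlling each summand via an operator $J$ with $\|Jf\|_{L^1(\nu)}=T_H(\cdots,f,\Delta_\e)$ and $\|Jf\|_{L^2(\nu)}^2=T_G(\cdots,f,\Delta_\e)$. Riesz--Thorin interpolation then reduces matters to a $4$-cycle estimate (Lemma~\ref{LinfL1}), which rests on a Fourier bound for averages over intersections of two spheres (Lemma~\ref{sphereintersection}). The quadratic whose root is $s_d$ comes from setting the resulting exponent $-5(d-s)+\tfrac{(d+2-s)(d-2)}{2(2d-s)}$ to zero, an expression specific to the nine-edge graph $G$ with no interpretation as a tripod threshold. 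The $4$-cycle does play a central role, but not via Cauchy--Schwarz on a tripod at its apex: it appears at the $L^1$ endpoint of the interpolation above and at the bottom of the lower-bound reduction $G\to H\to \Gamma$ (Lemmas~\ref{lowerboundGtoH} and~\ref{lowerbound4cycle}).
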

\begin{remark}
By direct calculation, we have $s_d<d-\frac{1}{20}$ for all $d\geq 3$ and $s_d<d-\frac{1}{10}$ for all $d\geq 4$.  For any $\e>0$, we have $s_d<d-\frac{2}{9}+\e$ when $d$ is sufficiently large.  We also note that $s_d>d-1$ for all $d\geq 3$ (this will be used several times in the proofs that follow).
\end{remark}

\begin{remark}
There are a handful of related results in the context of vector spaces over finite fields, with classifiers defined by spheres and hyperplanes, and quadratic residues: \cite{small,small2,quadratic,FIMW24, IMS23, PSTT25}.
    
\end{remark}

We can prove that the VC-dimension is at least $2$ under a weaker assumption. 

\begin{theorem} \label{mainchain} Let $d\geq 2$, and let $E \subset {\Bbb R}^d$ be compact. If $\dim E>\max(\frac{d+1}{2},d-1)$, then there exists a non-degenerate interval $I$ such that for every $t \in I$, the VC-dimension of ${\mathcal H}^d_t(E)$ is at least $2$. \end{theorem}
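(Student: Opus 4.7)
The plan is to produce, for every $t$ in a non-degenerate interval $I$, witnesses $y_{ij}\in E$ realizing the four sign patterns needed to shatter $\{x_1,x_2\}\subset E$: $|y_{ij}-x_1|=t$ iff $i=1$ and $|y_{ij}-x_2|=t$ iff $j=1$. Fix a Frostman measure $\mu$ on $E$ of exponent $s>\max(\tfrac{d+1}{2},d-1)$; let $\sigma_t$ denote the surface measure on the sphere of radius $t$ in $\R^d$ and $S_t(x)$ the sphere of radius $t$ centered at $x$. The witness $y_{00}$ is immediate: $\dim E>d-1$ prevents $E\subset S_t(x_1)\cup S_t(x_2)$.

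The crucial witness is $y_{11}$, equivalent to an isosceles triangle $(x_1,x_2,y_{11})$ with $|y_{11}-x_i|=t$. Under $s>\tfrac{d+1}{2}$, Mattila's Fourier-analytic representation shows that the pair-distance density $\rho(t)=\int_E(\mu*\sigma_t)(y)\,d\mu(y)$ is continuous in $t$ (using $|\widehat\sigma(\xi)|\lesssim|\xi|^{-(d-1)/2}$ and $I_1(\mu)<\infty$), while Falconer's theorem gives that $\Delta(E)$ has positive Lebesgue measure; thus $\{\rho>0\}$ is open and contains a non-degenerate interval $I_0\subset(0,\infty)$. A Cauchy--Schwarz bound
\[
\rho(t)^2\le\mu(E)\cdot\int_E\bigl((\mu*\sigma_t)(y)\bigr)^2\,d\mu(y)
\]
then forces the right-hand integral---the isosceles-triangle density---to be positive on $I_0$, yielding the triangle for each $t\in I_0$.

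For the asymmetric witnesses $y_{10},y_{01}$, I would invoke Marstrand's slicing theorem: under $s>d-1$, for $\mathcal{L}^1$-a.e.\ $t$ and $\mu$-a.e.\ $x$, the slice $E\cap S_t(x)$ has Hausdorff dimension at least $s-1>d-2$. Since $S_t(x_1)\cap S_t(x_2)$ is a $(d-2)$-sphere, this prevents $E\cap S_t(x_1)\subset S_t(x_2)$, producing $y_{10}\in E$ with $|y_{10}-x_1|=t$ and $|y_{10}-x_2|\neq t$; symmetrically for $y_{01}$. To couple this with the triangle construction, note that for almost every $t\in I_0$ the good slicing set has full $\mu$-measure, so restricting the integrand defining the isosceles-triangle density to $(x_1,x_2)$ in this set preserves positivity by Fubini; hence the triangle can be chosen with base points in the good slicing set.

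The main technical obstacle is to upgrade ``almost every $t\in I_0$'' to ``every $t$ in a non-degenerate sub-interval $I\subset I_0$,'' since Marstrand slicing is only an a.e.\ statement. One strategy is to reformulate the witness conditions with a fixed positive gap $\delta$ in each strict inequality $|y_{ij}-x_?|\ne t$: by compactness of $E$ the corresponding ``$\delta$-gap shattering'' set of $t$ is closed, so the full set is an $F_\sigma$ of full measure in $I_0$. A careful analysis combining the continuity of $\rho$ and of the isosceles-triangle density with the structure of this $F_\sigma$ should then yield the required sub-interval.
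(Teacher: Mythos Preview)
Your argument has a genuine gap at exactly the point you flag yourself, and the fix you suggest does not work. An $F_\sigma$ set of full Lebesgue measure in $I_0$ need not contain any interval: take a sequence of fat Cantor sets $C_k\subset I_0$ with $|C_k|\to |I_0|$; each $C_k$ is closed and nowhere dense, so $\bigcup_k C_k$ is an $F_\sigma$ of full measure but is meager, hence has empty interior by Baire category. So even granting that your $\delta$-gap shattering sets are closed and their union has full measure, you cannot conclude the existence of a sub-interval $I$. The Marstrand-type slicing input is inherently an almost-everywhere statement in $t$, and nothing in your outline upgrades it. (There is also a secondary issue: Marstrand slicing is usually stated for affine hyperplanes, and the spherical version you invoke for $E\cap S_t(x)$ needs its own justification.)

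The paper avoids this difficulty altogether by never invoking an a.e.-in-$t$ result. It works with the configuration integral $\Lambda_{\mathcal{G}_2,t,c}^\e\mu$ associated to the full graph $\mathcal{G}_2$ (a $4$-chain $y_1\!-\!x_1\!-\!y_{12}\!-\!x_2\!-\!y_2$ together with the isolated vertex $y_\varnothing$). The lower bound $\Lambda_{\mathcal{G}_2,t,c}^\e\mu\gtrsim 1$ holds for \emph{every} $t$ in the interval $I$ produced by Proposition~\ref{IT19}, since after deforestation the integral reduces to iterated convolutions $\sigma_t^\e*\mu\approx 1$. The limit measure $\Lambda\mu$ is then a genuine probability measure for each such $t$. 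The ``no extra edge'' condition you are trying to get from slicing---e.g.\ $|x_1-y_2|\neq t$---is instead obtained by showing that the set of configurations with $||x_1-y_2|-t|<\e$ has $\Lambda^\e$-mass $\lesssim \e^{s-(d-1)}\to 0$, using that the intersection of two $\e$-annuli has $\mu$-measure $\lesssim \e^{s-(d-2)}$. This is where the hypothesis $s>d-1$ enters, and it works for every $t\in I$, not almost every $t$. The right repair of your argument is to abandon Marstrand slicing for $y_{10},y_{01}$ and instead show directly, via such an annulus-intersection estimate, that the isosceles-triangle configurations with the unwanted extra relation $|y_{10}-x_2|=t$ form a null set for the triangle measure you have already constructed.
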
 

It is not hard to verify that the VC-dimension of $\mathcal{H}^d$ is $d+1$.  Therefore, it is natural to ask whether there is a non-trivial dimensional threshold which guarantees that the VC-dimension of $\mathcal{H}_t^d(E)$ is $d+1$.  In $d=2$, the answer is negative.  It is possible for the VC-dimension of ${\mathcal H}^2(E)$ to be equal to $2$ even if the Hausdorff dimension of $E$ is qual to $2$. This is due to the following result of Maga. 
\begin{theorem}[\cite{Maga}, Theorem 2.3]
For any dimension $d\in\N$, there exists a set $E\subset \R^d$ of Hausdorff dimension $d$ which does not contain the verticies of a parallelogram, i.e., a translate of a configuration $\{0,x,y,x+y\}$ with $x,y\neq 0$.
\end{theorem}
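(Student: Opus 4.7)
The plan is to construct $E$ as a Cantor-type set $E = \bigcap_{n \ge 0} E_n$, where each $E_n$ is a finite union of closed axis-aligned cubes of common side length $r_n$, and the nested sequence is chosen so that $E$ simultaneously has Hausdorff dimension $d$ and contains no non-degenerate parallelogram. Concretely, I would fix a rapidly growing sequence of integers $N_n$, set $r_n = \prod_{k \le n} N_k^{-1}$, and at each stage subdivide every cube of $E_n$ into $N_{n+1}^d$ congruent subcubes of side $r_{n+1}$, retaining a carefully chosen subcollection from each parent to form $E_{n+1}$.

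The key avoidance step is the following: at stage $n+1$, for each ordered triple $(Q_1, Q_2, Q_3)$ of already-retained subcubes and each parent cube $Q$, forbid the (at most one) subcube $Q_4 \subset Q$ whose center $c_4$ satisfies $|c_1 + c_4 - c_2 - c_3| \le C r_n$ for a suitable constant $C$. Since there are at most $M_{n+1}^3$ such triples (where $M_{n+1}$ denotes the total number of retained subcubes) and each triple forbids at most one subcube per parent, taking $N_{n+1}$ sufficiently large relative to $M_n$ leaves at least $N_{n+1}^{d - \varepsilon_n}$ subcubes per parent, with $\varepsilon_n \to 0$.

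A standard Frostman / mass-distribution argument applied to the natural uniform probability measures on $E_n$ and their weak-$\ast$ limit would then yield $\dim E = d$. To exclude parallelograms, suppose $a,b,c,d \in E$ are four distinct points with $a + d = b + c$, and let $n$ be the smallest stage at which they occupy four pairwise distinct cubes of $E_n$ with respective centers $c_a, c_b, c_c, c_d$; such an $n$ exists because distinct points are eventually separated by the shrinking partition. Then $|a + d - b - c - (c_a + c_d - c_b - c_c)| \le 2 r_n$ by the triangle inequality, while the stage-$n$ avoidance rule guarantees $|c_a + c_d - c_b - c_c| > 4 r_n$, contradicting $a + d = b + c$.

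The principal obstacle is that $a + d = b + c$ is an exact (codimension-$d$) equality rather than an approximate one: avoidance at a single scale does not suffice, and the construction must rule out parallelograms whose vertices first ``separate'' at each stage of the refinement. This forces careful calibration between the growth rate of $N_n$ and the cubic number $M_n^3$ of triples to be avoided at each stage; the former must be fast enough to absorb the latter while still maintaining $\varepsilon_n \to 0$. Choosing $N_n$ to grow, for example, as a tower of exponentials in $n$ suffices, and the remaining verification is essentially combinatorial bookkeeping.
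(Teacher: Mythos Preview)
The paper does not prove this theorem; it is quoted verbatim from Maga's paper and used only as a black box to derive the subsequent corollary, so there is no proof in the present paper to compare your attempt against. That said, your proposal has a genuine gap that would prevent it from yielding dimension $d$.

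The issue is the counting. Two inconsistencies compound each other. First, with the stated threshold $Cr_n$ (the \emph{parent} scale), the forbidden region $\{c_4 : |c_1+c_4-c_2-c_3|\le Cr_n\}$ is a ball of radius comparable to $r_n$, hence contains on the order of $N_{n+1}^d$ subcubes of side $r_{n+1}$, not ``at most one.'' Second, even if you shrink the threshold to $Cr_{n+1}$ so that each triple really does forbid $O(1)$ subcubes, the triples you must avoid are triples of retained stage-$(n+1)$ subcubes, and there are $M_{n+1}^3=(M_nK_{n+1})^3$ of them, where $K_{n+1}$ is the number of subcubes kept per parent. Requiring $N_{n+1}^d\gtrsim M_n^3K_{n+1}^3$ forces $K_{n+1}\lesssim N_{n+1}^{d/3}$ no matter how large $N_{n+1}$ is taken; the tower-of-exponentials growth you propose cannot help, because what controls the Hausdorff dimension is the ratio $\log K_{n+1}/\log N_{n+1}^d$, which is pinned at $\le 1/3$. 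Your construction therefore produces a set of dimension at most $d/3$.

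What makes the Keleti--Maga argument work is not brute-force avoidance of all near-parallelograms at every scale, but a structured choice of subcubes that encodes, in the digits at each finer scale, enough information about the coarser-scale position that the equation $a+d=b+c$ forces a collision. Your outline correctly identifies that the exactness of the parallelogram relation is the crux, but the resolution is an encoding trick rather than a calibration of growth rates.
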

Maga's result shows that it is impossible to guarantee $\mathcal{H}_t^2(E)$ shatters a set of $3$ points in $E\subset\R^2$ with dimensional assumptions alone.  Indeed, let $x_1,x_2,x_3\in E$ be distinct.  If $\mathcal{H}_t^2(E)$ shatters $\{x_1,x_2,x_3\}$, then there exists $y_{1,2,3}\in E$ such that $|y_{123}-x_i|=t$ for $i=1,2,3$, and there exists $y_{12}\in E$ such that $|y_{12}-x_i|=t$ for $i=1,2$ and $|y_{12}-x_3|\neq t$.  It follows that $E$ contains the parallelogram $\{x_1,x_2,y_{123},y_{12}\}$.  As a consequence, we have:
\begin{corollary}
There exists a compact set $E\subset \R^2$ of Hausdorff dimension $2$ such that for any $t>0$, the VC-dimension of $\mathcal{H}_t^2(E)$ is at most $2$.
\end{corollary}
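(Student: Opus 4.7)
The plan is to combine Maga's theorem with the parallelogram-embedding argument that the authors have already sketched in the paragraph preceding the corollary. First, I would invoke Maga's theorem with $d=2$ to produce a set $E_0 \subset \R^2$ of Hausdorff dimension $2$ that contains no four points forming a (nontrivial) parallelogram $\{a, a+x, a+y, a+x+y\}$ with $x,y \neq 0$. If the set furnished by Maga's construction is not already compact, I would pass to a compact subset of full Hausdorff dimension (a standard application of inner regularity of Hausdorff measure on Borel sets, together with the fact that absence of a parallelogram is inherited by subsets). Call the resulting compact set $E$.

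Next, I would argue by contradiction: suppose there exists $t>0$ such that the VC-dimension of $\mathcal{H}_t^2(E)$ is at least $3$, so some three distinct points $x_1,x_2,x_3 \in E$ are shattered. In particular, one obtains centers $y_{123}, y_{12} \in E$ (labelled by the dichotomy each realizes) with $|y_{123}-x_i|=t$ for $i=1,2,3$ while $|y_{12}-x_i|=t$ only for $i=1,2$, forcing $y_{123} \neq y_{12}$. Both $y_{123}$ and $y_{12}$ lie on the intersection of the two circles of radius $t$ centered at $x_1$ and $x_2$, which in $\R^2$ consists of at most two points, so $\{y_{123},y_{12}\}$ is precisely that pair and is symmetric about the line through $x_1,x_2$. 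Consequently the cyclically ordered quadrilateral $x_1, y_{123}, x_2, y_{12}$ is a rhombus with all four sides of length $t$, hence a parallelogram. This contradicts Maga's property of $E$.

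The only slightly delicate point is the compactness step, and verifying that the rhombus really is a parallelogram in the translate-form $\{0,x,y,x+y\}$ required by Maga's theorem: after translating so $x_1=0$, the rhombus symmetry forces $x_2 = y_{123} + y_{12}$, so the configuration is of the exact shape excluded by Maga. Everything else is just the geometric observation that two equidistant circles in the plane meet in at most two reflection-symmetric points, together with the shattering definition. The conclusion that $\mathcal{H}_t^2(E)$ has VC-dimension at most $2$ for every $t>0$ then follows immediately, giving the corollary.
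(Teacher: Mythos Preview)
Your proposal is correct and follows exactly the route the paper sketches in the paragraph preceding the corollary: invoke Maga's parallelogram-free set, then observe that shattering three points would force $\{x_1,y_{123},x_2,y_{12}\}$ to be a rhombus (hence a translate of $\{0,x,y,x+y\}$) inside $E$. Your added care about compactness and the explicit verification that the rhombus has the required $\{0,x,y,x+y\}$ form are details the paper leaves implicit, but the argument is the same.
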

In the course of proving Theorem \ref{main}, we also establish the following result on $4$-cycles which is interesting in its own right.
\begin{theorem}
\label{4cycle}
Let $E\subset \R^3$ be compact.  If $\dim(E)>\frac{53-\sqrt{337}}{12}\approx 2.887\dots$, then the set
\[
\{t\in\R: \exists \textup{ distinct }x,y,z,w\in E, |x-y|=|y-z|=|z-w|=|w-x|=t\}
\]
has non-empty interior.
\end{theorem}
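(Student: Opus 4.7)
The plan is to encode the count of $4$-cycles of side length $t$ as a mollified multilinear integral against a Frostman measure, and then show that this count defines a positive continuous function of $t$ on an interval. Let $\mu$ be a Frostman probability measure supported on $E$ with $I_s(\mu) < \infty$ for every $s < \dim(E)$, let $\sigma_t$ denote surface measure on the sphere of radius $t$ in $\R^3$, and let $\sigma_t^\e = \sigma_t * \psi_\e$ be its smoothing by an approximate identity $\psi_\e$. Define the approximate $4$-cycle count
\[
\nu^\e(t) := \iiiint \sigma_t^\e(x_1 - x_2)\sigma_t^\e(x_2 - x_3)\sigma_t^\e(x_3-x_4)\sigma_t^\e(x_4-x_1)\,d\mu(x_1)\,d\mu(x_2)\,d\mu(x_3)\,d\mu(x_4).
\]
Because $x_2$ and $x_4$ each appear in exactly two factors (the cycle edges incident to them), integrating them out gives the crucial reduction
\[
\nu^\e(t) = \iint \lambda^\e(y,w;t)^2\,d\mu(y)\,d\mu(w), \qquad \lambda^\e(y,w;t) := \int \sigma_t^\e(y-x)\,\sigma_t^\e(w-x)\,d\mu(x),
\]
so the $4$-cycle count equals the squared $L^2(\mu\times\mu)$-norm of the two-sphere intersection operator $\lambda^\e$.

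The main task is to establish, on some non-degenerate interval $I \subset (0,\infty)$, a uniform upper bound $\sup_{t \in I,\,\e > 0}\nu^\e(t) < \infty$, uniform equicontinuity of the family $\{\nu^\e(\cdot)\}_{\e > 0}$ on $I$, and a uniform lower bound $\liminf_{\e \to 0}\int_I \nu^\e(t)\,dt > 0$. By Arzel\`a--Ascoli these imply $\nu^\e \to \nu$ uniformly on $I$ along a subsequence, for some continuous $\nu \geq 0$ with $\int_I \nu > 0$, whence the open set $\{t \in I : \nu(t) > 0\}$ is non-empty and the theorem follows after discarding degenerate $4$-cycles. The lower bound on $\int_I \nu^\e\,dt$ is standard, coming from the positivity of $\sigma_t^\e * \mu$ in a tubular neighborhood of the diagonal together with the Frostman growth of $\mu$. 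The equicontinuity in $t$ follows from the smoothness of $t \mapsto \sigma_t^\e$ combined with the uniform upper bound. For the upper bound itself, Fourier inversion gives
\[
\lambda^\e(y,w;t) = \iint \widehat{\sigma_t^\e}(\x)\,\widehat{\sigma_t^\e}(\eta)\,\hat\mu(\x+\eta)\,e^{2\pi i(y\cdot\x + w\cdot\eta)}\,d\x\,d\eta,
\]
and expanding $\iint (\lambda^\e)^2\,d\mu\,d\mu$ produces an eight-fold Fourier integral whose integrand is a product of four copies of $\widehat{\sigma_t^\e}$ and four copies of $\hat\mu$ whose arguments form a closed cycle summing to zero. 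A dyadic decomposition in frequency, combined with the $\R^3$ sphere decay $|\widehat{\sigma_t}(\x)| \lesssim (1+|\x|)^{-1}$, the energy bound $\int |\hat\mu(\x)|^2 |\x|^{s-3}\,d\x \lesssim I_s(\mu)$, and standard $L^p$ estimates for the spherical averaging operator, reduces the required bound to an optimization problem over the exponents governing the interaction of these factors; the critical value in this optimization is $s_3 = (53-\sqrt{337})/12$. Degenerate $4$-cycles, in which some $x_i$ coincide, are handled by applying the same energy estimates to lower-dimensional sphere intersections, whose contribution is strictly smaller on the positivity subinterval.

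The main obstacle will be the sharp upper bound on $\nu^\e(t)$. The cyclic structure of the configuration couples the four $\hat\mu$ factors in a way that na\"ive H\"older or Cauchy--Schwarz cannot fully exploit, and reaching the precise threshold $s_3$ will require balancing the sphere Fourier decay against the Frostman energy through the specific combinatorics of the cycle. The algebraic form of $s_3$---a root of a quadratic with irrational discriminant---suggests that two distinct regimes in the estimate must be balanced against each other, and the bound obtained by either regime alone falls short of $s_3$.
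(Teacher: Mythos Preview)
Your overall framework---mollified configuration integral, upper and lower bounds, limit in $\e$, removal of degeneracies---parallels the paper's, and the reduction $\nu^\e(t)=\iint\lambda^\e(y,w;t)^2\,d\mu(y)\,d\mu(w)$ is correct and useful. But two steps in your outline are genuine gaps rather than routine details.

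The first and most serious is the upper bound, which you correctly flag as the main obstacle but do not actually carry out. The plan to expand $\nu^\e(t)$ as an eight-fold Fourier integral and attack it with dyadic decomposition, the sphere decay $|\widehat{\sigma_t}(\xi)|\lesssim(1+|\xi|)^{-1}$, and energy bounds is not specific enough to produce the threshold $(53-\sqrt{337})/12$; naive combinations of these ingredients give strictly weaker thresholds, and you have not indicated which two regimes are to be balanced to produce the quadratic whose root is $s_3$. The paper's route is more structured and rather different from yours: rather than bound $\nu^\e(t)$ directly, it bounds the telescoping differences $T_\Gamma\mu_{2\e}-T_\Gamma\mu_\e$, each of which contains a factor $\Delta_\e=\mu_{2\e}-\mu_\e$ whose Fourier transform is supported in $\{|\xi|\lesssim\e^{-1}\}$ and is $O(\e|\xi|)$ for small $|\xi|$. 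The key harmonic-analytic input is not the decay of $\widehat{\sigma_t}$ alone but the \emph{averaged} decay of the two-sphere intersection measure (Lemma~\ref{sphereintersection}):
\[
\int\int_{|u_1-u_2|<2\sqrt{t^2-c^2}}|\widehat{\sigma_{u_1,u_2}}(\xi)|^2\,d\sigma(u_1)\,d\sigma(u_2)\lesssim(1+|\xi|)^{-(d-2)}.
\]
This is combined with a frequency split at $|\xi|=\e^{-\alpha}$; optimizing gives $\alpha=\frac{d+2-s}{2d-s}$, and the resulting exponent $-3(d-s)+\frac{(d+2-s)(d-2)}{2(2d-s)}$ is positive exactly above the threshold in Lemma~\ref{4cycleupperbound}. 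Your outline contains no analogue of the sphere-intersection estimate or the telescoping mechanism.

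The second gap is the equicontinuity claim. You assert that equicontinuity of $\{\nu^\e\}_\e$ in $t$ follows from smoothness of $t\mapsto\sigma_t^\e$ combined with the uniform upper bound, but this fails: $\|\partial_t\sigma_t^\e\|_\infty$ carries an extra factor of $\e^{-1}$ compared to $\|\sigma_t^\e\|_\infty$, and on the Fourier side $|\widehat{\partial_t\sigma_t}(\xi)|$ loses a full power of $|\xi|$ relative to $|\widehat{\sigma_t}(\xi)|$, so any bound on $\partial_t\nu^\e(t)$ obtained by replacing one $\sigma_t^\e$ with $\partial_t\sigma_t^\e$ in your estimate will be strictly worse and will in general diverge as $\e\to 0$. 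The paper sidesteps Arzel\`a--Ascoli entirely: it establishes $\Lambda_{\Gamma,t,c}^\e\mu\approx 1$ for each fixed $t$ in an interval $I$ (the interval arising from the lower bound via Proposition~\ref{IT19}, not from continuity in $t$) and passes to a weak$^*$ limit measure on configuration space for each such $t$ (Lemma~\ref{Borelmeasurelemma}). If you want continuity of the limit $\nu(t)$, the telescoping bound of Lemma~\ref{LinfL1} is uniform on compact $t$-intervals, so $\nu^\e\to\nu$ uniformly and $\nu$ is continuous as a uniform limit of continuous functions---but this again requires the telescoping structure you have not set up.
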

\begin{remark}
Greenleaf, Pramanik, and the first listed author \cite{GIP17} proved that the conclusion of Theorem \ref{4cycle} holds when $d\geq 4$ and $E\subset \R^d$ is a set of dimension $\dim E>\frac{d+3}{2}$.  The techniques which prove Theorem \ref{4cycle} extend to higher dimensions, but the result is novel in $d=3$ only.  The existence of a non-trivial threshold follows from the main result of the first two listed authors \cite{IM20}, but Theorem \ref{4cycle} is the first explicit threshold for this configuration in $d=3$.
\end{remark}

\subsection{Notation and conventions}
\label{notation}
The VC-dimension of $\cH_t^d(E)$ is at least $k$ if there exist $x_i,y_I\in E$ for each $i\in \{1,\dots,k\}$ and $I\subset \{1,\dots,k\}$ such that $|x_i-y_I|=t$ if and only if $i\in I$.  We let $\cG_k$ denote the graph corresponding to these distance relations; that is, for $k\in \N$, let $\cG_k$ be the bipartite graph with vertex sets $\{1,\dots,k\}$ and the power set $\mathcal{P}(\{1,\dots,k\})$, and adjacency defined by set membership (i.e., $i\sim I$ if and only if $i\in I$).  \\

Throughout this paper, we will approximate various measures by taking convolutions with approximate identities associated to bump functions.  However, for technical reasons, we will need bump functions with different properties for approximations to different measures.  Fix Schwartz functions $\f,\psi$ on $\R^d$ such that $\f$ is supported in the unit ball, and $\widehat{\psi}$ is supported in the unit ball.  Let $\f^\e(x)=\e^{-d}\f(x/\e)$ be the corresponding approximation to the identity, and define $\psi^\e$ similarly.  For any $t>0$, let $\sigma_t$ be the normalized surface measure on the circle of radius $t$, and let $\sigma_t^\e=\sigma_t*\f^\e$.  For other measures $\mu$ (in practice, Frostman probability measures on compact sets $E\subset \R^d$ with large Hausdorff dimension) let $\mu^\e=\mu*\psi^\e$.

\section{Learning theory perspective on Theorem \ref{main}}

\vskip.125in 

From the point of view of learning theory, it is interesting to ask what the ``learning task" is in the situation at hand. It can be described as follows. We are asked to construct a function $f:E \to \{0,1\}$, $E \subset \R^2$, that is equal to $1$ on a sphere of radius $t$ centered at some $y^{*} \in E$, but we do not know the value of $y^{*}$. The fundamental theorem of statistical learning (see Theorem \ref{FTSL} below) tells us that if the VC-dimension of ${\mathcal H}_t^2(E)$ is finite, we can find an arbitrarily accurate hypothesis (element of ${\mathcal H}^2_t(E)$) with arbitrarily high probability if we consider a randomly chosen sampling training set of sufficiently large size. 

We shall now make these concepts precise (see \cite{DS14} and the references contained therein for more information). Let us recall some more basic notions.

\begin{definition} Given a set $X$, a probability distribution $D$ and a labeling function $f: X \to \{0,1\}$, let $h$ be a hypothesis, i.e $h:X \to \{0,1\}$, and define 
$$ L_{D,f}(h)={\Bbb P}_{x \sim D}[h(x) \not=f(x)],$$ where ${\Bbb P}_{x \sim D}$ means that $x$ is being sampled according to the probability distribution $D$. 
\end{definition} 

\vskip.125in 

\begin{definition} We say that the realizability assumption is satisfied if there exists $h^{*} \in {\mathcal H}$ such that $L_{D,f}(h^{*})=0$. \end{definition} 

\vskip.125in 

\begin{definition} A hypothesis class ${\mathcal H}$ is PAC learnable if there exist a function 
$$m_{{\mathcal H}}: {(0,1)}^2 \to {\Bbb N}$$ and a learning algorithm with the following property: For every $\epsilon, \delta \in (0,1)$, for every distribution $D$ over $X$, and for every labeling function $f: X \to \{0,1\},$ if the realizability assumption holds with respect to $X$, $D$, $f$, then when running the learning algorithm on $m \ge m_{{\mathcal H}}(\epsilon, \delta)$ i.i.d. examples generated by $D$, and labeled by $f$, the algorithm returns
a hypothesis $h$ such that, with probability of at least $1-\delta$ (over the choice of the examples), 
$$L_{D,f}(h) \leq \epsilon.$$
\end{definition} 

\vskip.125in 

The following theorem is a quantitative version of the fundamental theorem of machine learning, and provides the link between VC-dimension and learnability (see \cite{DS14}).

\begin{theorem} \label{FTSL} Let ${\mathcal H}$ be a collection of hypotheses on a set $X$. Then ${\mathcal H}$ has a finite VC-dimension if and only if ${\mathcal H}$ is PAC learnable. Moreover, if the VC-dimension of ${\mathcal H}$ is equal to $n$, then ${\mathcal H}$ is PAC learnable and there exist constants $C_1, C_2$ such that 
$$ C_1 \frac{n+\log \left(\frac{1}{\delta} \right)}{\epsilon} \leq m_{{\mathcal H}}(\epsilon, \delta) \leq C_2 \frac{n \log \left(\frac{1}{\epsilon} \right)+
\log \left(\frac{1}{\delta} \right)}{\epsilon}.$$

\end{theorem}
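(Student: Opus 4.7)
The plan is to prove the two directions of Theorem \ref{FTSL} separately, obtaining the quantitative sample complexity bounds in the process. For the upper bound (finite VC-dimension implies PAC learnability with $m_{\mathcal H}(\epsilon,\delta)\leq C_2(n\log(1/\epsilon)+\log(1/\delta))/\epsilon$), I would analyze the Empirical Risk Minimization (ERM) algorithm: given an i.i.d.\ training sample $S=(x_1,\dots,x_m)$ labeled by $f$, return any $h\in{\mathcal H}$ consistent with $S$. The realizability assumption guarantees some such $h$ exists. What must be shown is that, with probability at least $1-\delta$ over $S$, every hypothesis consistent with $S$ has true error at most $\epsilon$.

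The combinatorial input is the Sauer--Shelah lemma: if the VC-dimension equals $n$, then for any finite set $C\subset X$ of size $m$, the number of distinct restrictions of ${\mathcal H}$ to $C$ is at most $\tau_{\mathcal H}(m)\leq \sum_{i=0}^{n}\binom{m}{i}\leq(em/n)^n$. The probabilistic input is a symmetrization / ``ghost sample'' argument: one replaces the event ``some bad $h$ is consistent on $S$'' by an event on a doubled sample $S\cup S'$, exchangeability of $S$ and $S'$ reduces the problem to a finite combinatorial event on $|S|+|S'|=2m$ points, and the union bound over $\tau_{\mathcal H}(2m)$ distinct restrictions yields a failure probability bounded by roughly $2\,\tau_{\mathcal H}(2m)\,2^{-m\epsilon/2}$. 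Setting this equal to $\delta$ and solving for $m$ via the Sauer--Shelah bound gives the stated upper bound.

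For the converse direction, I would prove the contrapositive: if $\mathcal H$ has VC-dimension at least $n$, then $m_{\mathcal H}(\epsilon,\delta)\geq C_1(n+\log(1/\delta))/\epsilon$, and in particular infinite VC-dimension rules out PAC learnability. Fix a shattered set $C=\{c_1,\dots,c_n\}\subset X$. The first piece, giving the $n/\epsilon$ term, comes from a ``No Free Lunch''-style argument: put mass $1-2\epsilon$ on $c_1$ and distribute the remaining $2\epsilon$ uniformly over $c_2,\dots,c_n$, and consider $f$ to be a uniformly random labeling of $C$. Since $C$ is shattered each such $f$ is realizable by some $h^\ast\in{\mathcal H}$, and any learner seeing $m<(n-1)/2$ samples misses at least half of $c_2,\dots,c_n$ in expectation, forcing expected error $\gtrsim\epsilon$ and hence failure probability $\gtrsim\delta$ for small $\delta$. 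The $\log(1/\delta)/\epsilon$ term is obtained from an independent two-point distribution argument: for a shattered pair, a biased coin-flip labeling makes correct identification of the minority label essentially a Bernoulli estimation problem, whose sample complexity scales like $\log(1/\delta)/\epsilon$. Combining the two lower bounds yields $C_1(n+\log(1/\delta))/\epsilon$.

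The main obstacle is the symmetrization step in the upper bound, which is where the combinatorial VC bound is actually converted into a uniform probabilistic deviation estimate; this is the conceptual heart of the proof and the step where constants are easiest to lose. A secondary technical point is aligning the two lower-bound constructions so that both the $n/\epsilon$ and the $\log(1/\delta)/\epsilon$ terms appear with matching constants, and verifying that the realizability assumption is preserved by the adversarial distributions used. Assuming these standard ingredients (Sauer--Shelah, ghost-sample symmetrization, and the No Free Lunch lemma), the theorem as stated follows; for the detailed execution I would refer the reader to \cite{DS14}.
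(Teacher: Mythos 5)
This theorem is not proved in the paper at all: it is quoted as background, with the proof deferred to the standard reference \cite{DS14}, and your sketch follows exactly the argument given there (ERM plus Sauer--Shelah plus ghost-sample symmetrization for the upper bound, a No-Free-Lunch construction on a shattered set plus a two-point Bernoulli argument for the lower bound), so you are on the same route as the paper's source. One small caution: in the $n/\epsilon$ lower bound the condition should track the number of samples falling in the $2\epsilon$-mass region (roughly $2\epsilon m$ of them), not the raw sample size $m<(n-1)/2$, since it is precisely this thinning by a factor of $\epsilon$ that produces the $1/\epsilon$ in that term.
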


\section{Configuration integrals corresponding to finite graphs}
\label{configurationintegrals}
Let $G$ be a graph on vertices $\{0,1,\dots,n\}$ with adjacency relation $\sim$.  For a vertex $j$ and points $x_0,\dots,x_n\in \R^d$, let $A_j(x_0,\dots,x_{j-1})$ denote the affine span of $\{x_i:i\sim j,j<i\}$.  For $c>0$, define
\begin{equation}
\label{Nc}
N_c=\{(x_0,\dots,x_n)\in (\R^d)^{n+1}:\text{dist}(x_j,A_j(x_0,\dots,x_{j-1}))>c \ \ \forall j\}.
\end{equation}

For any probability measure $\mu$ on $\R^d$, define
\begin{equation}
\label{Lambda}
\Lambda_{G,t,c}^\e\mu:=\int_{N_c} \left(\prod_{i\sim j} \sigma_t^\e(x_i-x_j)\right)d\mu^{n+1}(x_0,\dots x_n).
\end{equation}
For small values of $\e$, this integral is a weighted count of the point configurations $(x_0,\dots,x_n)$, consisting of points in the support of $\mu$ which are non-degenerate in a quantitative sense, and such that $|x_i-x_j|=t$ whenever the points $x_i,x_j$ correspond to adjacent vertices $i\sim j$ in our graph.  The following lemma makes this idea precise.
\begin{lemma}
\label{Borelmeasurelemma}
Let $\mu$ be a measure supported on a compact set $E\subset \R^d$, and suppose $G,t,c$ are such that $\Lambda_{G,t,c}^\e\mu\approx 1$, with constants independent of $\e$.  Then, as $\e\to 0$, $\Lambda_{G,t,c}^{\e} \mu$ has a weak* limit $\Lambda_{G,t,c}\mu$ which is a Borel probability measure supported on the set
\[
\{(x_0,\dots,x_n)\in E^{n+1}\cap N_c:|x_i-x_j|=t \text{ for all } i\sim j\}.
\]
\end{lemma}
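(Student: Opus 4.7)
The plan is a standard weak$^*$ compactness argument in the space of finite Borel measures on $E^{n+1}$, followed by test-function computations to locate the support of the limit.

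First, I view $\Lambda_{G,t,c}^\e\mu$ as defining a positive finite Borel measure on $(\R^d)^{n+1}$, namely the one with density $\mathbf{1}_{N_c}(x_0,\dots,x_n)\prod_{i\sim j}\sigma_t^\e(x_i-x_j)$ with respect to $\mu^{n+1}$; its total mass is precisely the number appearing in (\ref{Lambda}). Each such measure is supported in the compact set $E^{n+1}$, and the hypothesis $\Lambda_{G,t,c}^\e\mu\approx 1$ gives a uniform upper bound on total masses. By Banach--Alaoglu applied to $C(E^{n+1})^*$, I extract a subsequence $\e_k\to 0$ along which these measures converge weak$^*$ to a positive finite Borel measure $\Lambda_{G,t,c}\mu$ on $E^{n+1}$. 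Testing against the constant function $1$ shows the limit has total mass $\approx 1$, and after the obvious normalization it is a probability measure.

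Second, I locate the support. For the distance constraints, fix an edge $i\sim j$ of $G$ and $\delta>0$, and pick a continuous $\chi:[0,\infty)\to[0,1]$ with $\chi(r)=1$ for $|r-t|\geq\delta$ and $\chi(r)=0$ for $|r-t|\leq\delta/2$. Since $\sigma_t^\e=\sigma_t*\f^\e$ is supported in an $\e$-neighborhood of the sphere of radius $t$, the product $\chi(|x_i-x_j|)\,\sigma_t^\e(x_i-x_j)$ vanishes identically once $\e<\delta/2$. Hence $\int \chi(|x_i-x_j|)\,d\Lambda_{G,t,c}^\e\mu=0$ for small $\e$, and weak$^*$ convergence against the continuous function $\chi(|x_i-x_j|)$ gives $\int \chi(|x_i-x_j|)\,d\Lambda_{G,t,c}\mu=0$. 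Letting $\delta$ run along $1/k$ and taking a union over the finitely many edges of $G$ forces the set of configurations violating some distance constraint to have $\Lambda_{G,t,c}\mu$-measure zero. Support in $E^{n+1}$ is automatic since $E^{n+1}$ is closed and contains every $\Lambda_{G,t,c}^\e\mu$; an analogous argument with continuous test functions supported in the closed set $\{(x_0,\dots,x_n):\mathrm{dist}(x_j,A_j(x_0,\dots,x_{j-1}))\leq c \text{ for some } j\}$ yields that $\Lambda_{G,t,c}\mu$ is concentrated on $\overline{N_c}$.

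The main (mild) obstacle is the $N_c$ condition as written: weak$^*$ convergence can in principle push mass onto the boundary $\partial N_c$, so strictly speaking the limit lives on $\overline{N_c}$ rather than on the open set $N_c$ itself. I read ``supported on'' in the statement in the measure-theoretic sense of ``assigns full mass to,'' which is the content actually needed in the sequel; subsequent applications require the distance constraints together with the quantitative non-degeneracy encoded by $N_c$, both of which pass to the weak$^*$ limit without further effort.
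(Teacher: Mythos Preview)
Your proposal is correct and follows essentially the same approach as the paper: both arguments identify $\Lambda_{G,t,c}^\e\mu$ with a bounded family of measures on a compact set and invoke sequential Banach--Alaoglu to extract a weak$^*$ limit, then read off the support. Your treatment is in fact more explicit than the paper's, which simply asserts that ``it is easy to verify that the support of this measure is as claimed''; your test-function argument using the compact support of $\f$ (hence of $\sigma_t^\e$) fills in exactly that verification, and your remark about $N_c$ versus $\overline{N_c}$ identifies a boundary subtlety the paper does not address.
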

\begin{proof}
Let $X\subset (\R^d)^{n+1}$ be a compact set containing a neighborhood of $E^{k+1}$, and let $\mathcal{M}(X)$ be the space of complex measures on $K$.  For sufficiently small $\e$, identify $\Lambda_{G,t,c}^\e\mu$ with the measure in $\mathcal{M}(X)$ defined as follows.  For $A\subset X$, let $A_\e$ denote the $\e$-neighborhood of $A$.  Let $\Lambda_{G,t,c}^\e\mu(A)$ denote the integral in (\ref{Lambda}), modified to integrate over $N_c\cap A_\e$ instead of just $N_c$.  The bound $\Lambda_{G,t,c}^\e\mu\lesssim 1$ implies that this family of measures is bounded in the total variation norm.  Since $\mathcal{M}(X)$ is dual to the space of continuous functions on $K$, and this space is separable, the sequential version of the Banach-Alaoglu Theorem implies it has a weak* limit $\Lambda_{G,t,c}\mu$.  The bound $\Lambda_{G,t,c}^\e\mu\gtrsim 1$ implies this measure can be normalized so that it is a probability measure.  It is easy to verify that the support of this measure is as claimed.
\end{proof}
\begin{proposition}[\cite{IT19}, Lemma 2.1]
\label{IT19}
Let $E\subset \R^d$ be compact, and let $\mu$ be a Frostman probability measure supported on $E$ with exponent $s>\frac{d+1}{2}$.  There exists $E_0\subset E$ and a non-degenerate interval $I\subset \R$ such that $\mu(E_0)>0$, and for all $\e>0$ sufficiently small, $t\in I$, and $x\in E_0$, we have $\sigma_t^\e*\mu(x)\approx 1$.
\end{proposition}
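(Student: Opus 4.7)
The plan is to reduce pointwise bounds on $F^\e(x,t) := \sigma_t^\e * \mu(x)$ to uniform integral bounds via Fourier analysis, and then extract the desired product set $E_0 \times I$ by combining these with a weak-$*$ compactness and Egorov argument.

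First, I would establish the uniform upper bound $\int F^\e(x,t)\,d\mu(x) \le C$ for $\e > 0$ and for $t$ in any bounded subinterval of $(0,\infty)$. By Plancherel's identity,
$$
\int F^\e(x,t)\,d\mu(x) = \iint \sigma_t^\e(x-y)\,d\mu(x)\,d\mu(y) = \int \widehat{\sigma_t^\e}(\x)\,|\widehat{\mu}(\x)|^2\,d\x.
$$
The classical decay $|\widehat{\sigma_t}(\x)| \lesssim (1+|\x|)^{-(d-1)/2}$ (uniform in $t$ on compacts) together with $|\widehat{\f^\e}(\x)| \le \|\f\|_{L^1}$ yields $|\widehat{\sigma_t^\e}(\x)| \lesssim (1+|\x|)^{-(d-1)/2}$, while the Frostman hypothesis $s > \tfrac{d+1}{2}$ implies, via the Fourier form of the energy integral, that $\int |\widehat{\mu}(\x)|^2 (1+|\x|)^{-(d-1)/2}\,d\x < \infty$. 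These combine to give the claimed bound.

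Next, I would establish the lower bound $\int_a^b \int F^\e(x,t)\,d\mu(x)\,dt \ge c > 0$ for some fixed interval $[a,b]$, uniformly in $\e$. By Fubini the left-hand side equals $\iiint \sigma_t^\e(x-y)\,dt\,d\mu(x)\,d\mu(y)$, and a direct computation shows $\int_a^b \sigma_t^\e(z)\,dt$ is a bounded multiple of the indicator function of a thick shell $\{|z|\in[a-O(\e),b+O(\e)]\}$. Hence the expression approaches a positive constant times $\mu\otimes\mu(\{a\le|x-y|\le b\})$ as $\e \to 0$. Since $\mu$ is a probability measure supported on a compact set of positive diameter, we can choose $[a,b]$ so that the distance distribution places positive mass on $[a,b]$, yielding the lower bound uniformly in $\e$.

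To pass from integral to pointwise bounds I would also derive the uniform $L^2$ estimate $\int_a^b \int F^\e(x,t)^2\,d\mu(x)\,dt \le C$ by a similar Plancherel argument. The family $\{F^\e\}$ is then bounded in $L^2(\mu\otimes dt)$, and a weak-$*$ limit $F$ along a subsequence $\e_n\to 0$ satisfies the same integral bounds. The Paley--Zygmund inequality applied to $F$ guarantees that the set $\{(x,t): c'\le F(x,t)\le C'\}$ has positive $\mu\otimes dt$ measure, and since $F^\e \to F$ pointwise almost everywhere (via the mollifier structure and Lebesgue differentiation), Egorov's theorem produces a subset of positive measure on which the convergence is uniform. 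A Fubini/slicing step then extracts a rectangle $E_0 \times I$ of positive product measure inside this set, on which $F^\e(x,t) \approx 1$ for all sufficiently small $\e$. The principal obstacle is this final extraction: going from a general measurable set where the bounds and uniform convergence hold to a product set $E_0 \times I$ of positive measure satisfying the same properties, and ensuring the bounds hold for \emph{all} small $\e$ rather than merely along a subsequence, requires carefully combining Fubini with Egorov and exploiting the continuity of $F^\e$ in the mollification parameter.
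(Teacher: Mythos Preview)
The paper does not prove this proposition; it is quoted without proof as Lemma~2.1 of \cite{IT19}, so there is no argument in the present paper against which to compare. I will comment directly on your proposal.

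Your Fourier-analytic upper bound on $\int F^\e(x,t)\,d\mu(x)$ and the averaged-in-$t$ lower bound are correct and standard. The genuine gap is precisely where you flag it, but your sketch does not close it. A set of positive $\mu\otimes dt$-measure in $E\times[a,b]$ need not contain any rectangle $E_0\times I$ of positive product measure; ``Fubini/slicing'' alone cannot manufacture one, and Egorov only outputs an arbitrary measurable subset, not a rectangle. The ingredient you are missing is \emph{continuity in $t$} (not continuity in $\e$, which you do mention, but which is immediate from the smooth mollifier and does nothing toward producing an interval in $t$). One shows that $t\mapsto \sigma_t^\e*\mu(x)$ is continuous uniformly for small $\e$, using that $\widehat{\sigma_t}(\xi)$ is continuous in $t$ and dominated by $(1+|\xi|)^{-(d-1)/2}$, which is integrable against $|\widehat{\mu}|^2$ by the energy hypothesis. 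With this in hand one argues: pick $t_0$ so that $E_0':=\{x:c'\le F(x,t_0)\le C'\}$ has positive $\mu$-measure; for each $x\in E_0'$ continuity in $t$ gives an interval $(t_0-\delta_x,t_0+\delta_x)$ on which the bounds persist; then $\{x\in E_0':\delta_x>1/n\}$ has positive measure for some $n$, which yields the desired $E_0$ and $I=(t_0-1/n,t_0+1/n)$. Without this step your argument does not produce an interval.

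A secondary problem: your appeal to ``Lebesgue differentiation'' for the pointwise convergence $F^\e\to F$ is not valid as stated, since Lebesgue differentiation yields convergence Lebesgue-a.e., whereas $\mu$ is typically singular with respect to Lebesgue measure. The correct route is to show $\sigma_t^\e*\mu$ is Cauchy in $L^1(\mu)$ (or $L^2(\mu)$) as $\e\to 0$ via the Fourier side and the energy bound; this gives a $\mu$-a.e.\ limit along a subsequence, and the continuity in $\e$ you mention then upgrades to all small $\e$. Your claimed $L^2(\mu\otimes dt)$ bound is also not ``a similar Plancherel argument'': the quantity $\int(\sigma_t^\e*\mu)^2\,d\mu$ is a three-fold $\mu$-integral (a hinge configuration), and bounding it uniformly requires more than the single application of Plancherel that handles the $L^1$ bound.
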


\begin{lemma}[Deforestation lemma]
\label{deforestation}
Let $G$ be a finite graph on $\{0,\dots,n\}$, and suppose the vertices are ordered such that $\deg(i)\geq 2$ if and only if $i\leq n_0$.  Let $G_0$ be the graph on vertices $\{0,\dots,n_0\}$ obtained by deleting vertices $n_0+1,\dots,n$ from $G$, along with their incident edges.  Suppose further that $\mu$ is a probability measure, and $t,\e$ are such that $\sigma_t^\e*\mu\approx 1$ on the support of $E$.  Then,
\[
\Lambda_{G,t,c}^\e\mu\approx \Lambda_{G_0,t,c}^\e\mu.
\]
\end{lemma}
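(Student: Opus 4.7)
The plan is to proceed by induction on $k := n - n_0$, peeling off the highest-indexed vertex one at a time. For the base case $k=0$ we have $G = G_0$ and the identity is trivial. For the inductive step, let $j = n$ be the top vertex, so $\deg_G(n) \leq 1$. The argument only uses the weaker hypothesis that $\deg_G(i) \leq 1$ for every $i > n_0$, which is fortunate since the full hypothesis need not persist under peeling: if some $i \leq n_0$ is adjacent to $n$, its degree in $G \setminus \{n\}$ may drop below $2$, but this does not obstruct the induction.

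The key structural observation is that for each $j' < n$ the affine span $A_{j'}$ depends only on $x_0, \dots, x_{j'-1}$, and in particular does not involve $x_n$. Consequently the non-degeneracy set $N_c$ factors, relative to the decomposition $(x_0, \dots, x_{n-1}) \times x_n$, into a constraint on $x_n$ alone (given the earlier variables) and constraints on $x_0, \dots, x_{n-1}$ alone. Apply Fubini to integrate $x_n$ first. If $\deg_G(n) = 0$, then $x_n$ appears in no $\sigma_t^\e$ factor and $A_n = \emptyset$, so the inner integral is $\int d\mu(x_n) = 1$. If $\deg_G(n) = 1$ with unique neighbor $m < n$, then $x_n$ appears only in $\sigma_t^\e(x_m - x_n)$ and the constraint becomes $|x_n - x_m| > c$. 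Choosing $\e < t - c$, the support of $\sigma_t^\e$ (an annular shell around the sphere of radius $t$) lies inside $\{|x_n - x_m| > c\}$, so the constraint is non-binding and the inner integral equals $\sigma_t^\e * \mu(x_m) \approx 1$ by hypothesis.

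In either case the inner integral is $\approx 1$ uniformly in $x_m \in \supp(\mu)$. Pulling this factor out of the outer integral yields $\Lambda_{G,t,c}^\e\mu \approx \Lambda_{G', t, c}^\e\mu$, where $G' := G \setminus \{n\}$. Since $G'$ still satisfies the weaker hypothesis (removing a vertex cannot raise any degree), and since the graph on vertices $\{0, \dots, n_0\}$ obtained from $G'$ by deleting the remaining leaves coincides with $G_0$, the inductive hypothesis closes the argument. The multiplicative constants accumulate over the $n - n_0$ peeling steps but remain absolute because the graph is fixed.

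The main obstacle is the bookkeeping around the $N_c$ constraint on the peeled variable: verifying that it is non-binding against $\sigma_t^\e$ requires $\e$ to be small relative to $t - c$, which implicitly presumes $c < t$. Beyond this, the proof reduces to iterated Fubini combined with the single input $\sigma_t^\e * \mu \approx 1$.
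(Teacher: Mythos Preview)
Your proof is correct and follows essentially the same approach as the paper's: integrate out the low-degree vertices using $\sigma_t^\e*\mu\approx 1$ on $\supp\mu$, the only difference being that you peel one vertex at a time by induction while the paper removes all degree-$0$ and degree-$1$ vertices simultaneously. Your treatment is in fact slightly more careful than the paper's, since you explicitly address why the $N_c$ constraint on the peeled variable is non-binding (via $\e<t-c$), a point the paper glosses over.
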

\begin{proof}
Without loss of generality, suppose we have ordered our vertices so that $\deg(i)=1$ for $i=n_0+1,\dots,n_1$ and $\deg{i}=0$ for $i=n_1+1,\dots,n$.  In the $\deg(i)=1$ case, let $j(i)$ be the unique vertex adjacent to $i$.  Then, we have
\begin{align*}
\Lambda_{G,t,c}^\e\mu &=\int_{N_c} \left(\prod_{i\sim j} \sigma_t^\e(x_i-x_j)\right)d\mu^{n+1}(x_0,\dots x_n) \\
&=\int_{N_c} \left(\prod_{\substack{i\sim j \\ i,j\leq n_0}} \sigma_t^\e(x_i-x_j)\right)\left(\prod_{i=n_0+1}^{n_1} \sigma_t^\e(x_i-x_{j(i)})\right)d\mu^{n_1+1}(x_0,\dots x_{n_1}),
\end{align*}
where we have used the fact that $\mu$ is a probability measure to integrate out $x_{n_1+1},\dots,x_n$.  Next we run the integrals in $x_{n_0+1},\dots,n_{n_1}$, giving
\begin{align*}
\Lambda_{G,t,c}^\e\mu&=\int_{N_c} \left(\prod_{\substack{i\sim j \\ i,j\leq n_0}} \sigma_t^\e(x_i-x_j)\right)\left(\prod_{i=n_0+1}^{n_1} \sigma_t^\e*\mu(x_{j(i)})\right)d\mu^{n_0+1}(x_0,\dots x_{n_0}) \\
&\approx \int_{N_c} \left(\prod_{\substack{i\sim j \\ i,j\leq n_0}} \sigma_t^\e(x_i-x_j)\right)d\mu^{n_0+1}(x_0,\dots x_{n_0}) \\
&=\Lambda_{G_0,t,c}^\e\mu.
\end{align*}
\end{proof}
To prove the bounds we need, we will also need an alternate but equivalent configuration integral.  Fix a graph $G$ with the same notation as above, and for $j=1,\dots,n$, let $V_j=\{i: i<j, i\sim j\}$.  We will always assume that $V_j$ is non-empty for every $j$.  Given $u_1,\dots,u_{j-1}\in\R^d$, let $\sigma_{t;u_1,\dots,u_{j-1}}$ be the normalized surface measure on the surface
\[
S_{t;u_1,\dots,u_{j-1}}:=\{u\in \R^d:|u-u_i|=t \text{ for all } i\in V_j\},
\]
with the convention $u_0=0$.  Let $A_j(u_1,\dots,u_{j-1})$ denote the affine span of $\{u_i:i\in V_j\}$.  Given a constant $c>0$ and $j=0,\dots,n$, let $\eta_{c,j}:(\R^d)^{j+1}\to \R$ be a smooth bump function satisfying $0\leq \eta_{c,j}\leq 1$ and
\[
\eta_{c,j}(u_1,\dots,u_j)=
\begin{cases}
0, & \text{dist}(u_j,A_j(u_1,\dots,u_{j-1}))<c \\
1, & \text{dist}(u_j,A_j(u_1,\dots,u_{j-1}))>2c.
\end{cases}
\]
Let 
\[
d\omega_{t,c,j}(u_1,\dots,u_j)=\eta_{c,j}(u_1,\dots,u_j)d\sigma_{t;u_1,\dots,u_{j-1}}(u_j)\dots d\sigma_{t;u_1}(u_2)d\sigma_t(u_1).
\]
The set $S_{t;u_1,\dots,u_{j-1}}$ is the intersection of $|V_j|$-many spheres in $\R^d$; as a result, we have the following estimate.

Let $\eta_c=\eta_{c,n}$ and $\omega_{c,t}=\omega_{c,t,n}$.  Given a graph $G$ and parameters $c,t>0$, we define the following associated multilinear operator:
\begin{equation}
\label{T}
T_{G,t,c}(f_0,f_1,\dots,f_n)=\int f_0(x)f_1(x-u_1)\dots f_n(x-u_n)d\omega_{c,t}(u_1,\dots,u_n)dx.
\end{equation}
Let $T_{G,t,c}f=T_{G,t,c}(f,f,\dots,f)$. 
\begin{lemma}
\label{equivalence}
For any finite graph $G$ and parameters $t,c,\e>0$, we have
\[
T_{G,c,t}(f_0,f_1,\dots,f_n)\approx \int_{N_c} \left(\prod_{i\sim j} \sigma_t^\e(x_i-x_j)\right)f_0(x_0)f_1(x_1)\cdots f_n(x_n)dx_0dx_1\cdots dx_n.
\]
In particular, if $\mu$ is a probability measure, we have $\Lambda_{G,t,c}^\e\mu\approx T_{G,t,c}\mu^\e$.
\end{lemma}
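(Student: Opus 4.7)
The plan is to reduce the claim to a local transversality estimate via the change of variables $x_0 = x$ and $x_j = x - u_j$ for $j = 1, \ldots, n$ on the right-hand side. Under this substitution, $f_j(x_j) = f_j(x-u_j)$, the sphere condition $|u_j - u_i| = t$ (with the convention $u_0 = 0$) that defines the support of $d\sigma_{t; u_1, \ldots, u_{j-1}}$ becomes $|x_j - x_i| = t$ for $i \in V_j$, and the affine span $A_j(u_1, \ldots, u_{j-1})$ translates directly into $A_j(x_0, \ldots, x_{j-1})$. Grouping the edge product on the right-hand side as $\prod_{i \sim j} \sigma_t^\e(x_i - x_j) = \prod_{j=1}^n \prod_{i \in V_j} \sigma_t^\e(x_i - x_j)$, the task reduces to comparing each inner product, as a measure in $x_j$, with the surface measure on the corresponding intersection of spheres.

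The core step is the local transversality estimate: whenever the point $x_j$ satisfies $\text{dist}(x_j, A_j(x_0, \ldots, x_{j-1})) \gtrsim c$, one has
\[
\prod_{i \in V_j} \sigma_t^\e(x_i - x_j) \, dx_j \approx d\sigma_{t; x_0, \ldots, x_{j-1}}(x_j),
\]
interpreted as an approximation of measures against test functions. The gradients $\nabla_{x_j} |x_j - x_i| = (x_j - x_i)/t$ for $i \in V_j$ form a system whose Gram determinant is bounded below in terms of $c$ and $t$ precisely in the non-degenerate region. A change of coordinates splitting into directions normal and tangential to $S_{t; x_0, \ldots, x_{j-1}}$ then identifies the product of thickened spherical measures with a thickening of the surface measure on this transverse intersection.

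With this estimate in hand, I would integrate the variables $x_n, x_{n-1}, \ldots, x_1$ in decreasing order of index, replacing each inner product $\prod_{i \in V_j} \sigma_t^\e(x_i - x_j)\,dx_j$ by $d\sigma_{t; x_0, \ldots, x_{j-1}}(x_j)$. The non-degeneracy cutoffs---either the hard restriction to $N_c$ on the right-hand side, or the soft cutoffs $\eta_{c,j}$ built into $\omega_{c,t}$ on the left---ensure that the transversality estimate applies uniformly, with implicit constants depending only on $G$, $c$, and $t$. After all substitutions the remaining outer integral in $x_0$ matches the $dx$ integral in the definition of $T_{G,t,c}$, proving the main identity; the second assertion $\Lambda_{G,t,c}^\e \mu \approx T_{G,t,c} \mu^\e$ is then the specialization $f_0 = \cdots = f_n = \mu^\e$.

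The main technical obstacle is making the transversality estimate quantitatively uniform up to the boundary of the non-degenerate region: as $\text{dist}(x_j, A_j)$ decreases toward $c$, the Gram determinant of the gradient vectors degenerates and the normal/tangential decomposition loses quantitative control. The buffer built into the definition of $\eta_{c,j}$ (its smooth interpolation between the thresholds $c$ and $2c$) is designed precisely to absorb this boundary contribution into the implicit constants via a quantitative implicit function argument applied to the system $|x_j - x_i| = t$, $i \in V_j$.
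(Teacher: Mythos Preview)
Your proposal is correct and follows essentially the same route as the paper's proof: both arguments rest on the change of variables $x_j = x - u_j$ and the local transversality estimate
\[
\prod_{i\in V_j}\sigma_t^\e(u_j-u_i)\,du_j \;\approx\; d\sigma_{t;u_1,\dots,u_{j-1}}(u_j)
\]
on the non-degenerate region, applied inductively one variable at a time. The paper's proof is terser---it simply asserts the approximate equality at the innermost layer and then says ``continuing in this way for each variable''---whereas you spell out the mechanism behind that step (Gram determinant bounded below, normal/tangential splitting, implicit function argument) and explicitly note how the smooth buffer in $\eta_{c,j}$ absorbs boundary effects; this added detail is welcome but does not change the underlying strategy.
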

\begin{proof}
Write $\widetilde{u}=(u_1,\dots,u_{n-1})$, and let
\[
F(x,\widetilde{u})=f_0(x)f_1(x-u_1)\dots f_{n-1}(x-u_{n-1}).
\]
Then, the expression for $T_{G,t,c}(f_0,\dots,f_n)$ simplifies to
\begin{align*}
&\int\int\int_{\text{dist}(u_n,A_n(0,u_1,\dots,u_n))>c} F(x,\widetilde{u})f_n(x-u_n)d\sigma_{\widetilde{u}}(u_n)d\omega_{c,t,n-1}(\widetilde{u})dx \\
\approx &\int\int\int_{\text{dist}(u_n,A_n(0,u_1,\dots,u_n))>c} F(x,\widetilde{u})\left(\prod_{i\sim n}\sigma_t^\e(u_n-u_i)\right)f_n(x-u_n)du_nd\omega_{c,t,n-1}(\widetilde{u})dx \\
=&\int\int\int_{\text{dist}(u_n,A_n(0,u_1,\dots,x-x_n))>c} F(x,\widetilde{u})\left(\prod_{i\sim n}\sigma_t^\e((x-x_n)-u_i)\right)f_n(x_n)dx_nd\omega_{c,t,n-1}(\widetilde{u})dx,
\end{align*}
where the last line is obtained by the linear change of variables $x_n=x-u_n$.  Continuing in this way for each variable, we obtain the claim.
\end{proof}

\section{Bounds on the configuration integral of the $4$-cycle}
\label{4cyclesection}
Throughout the remainder of the paper, we let $\Gamma$ denote the $4$-cycle graph.  We make the convention that the vertex set is $\{0,1,2,3\}$ and the adjacency relation is given by $0\sim 1\sim 3\sim 2\sim 0$.

\begin{figure}[h!]
\begin{center}
\begin{tikzpicture}[scale=2, thick]
  \node[fill=black, circle, minimum size=4pt, inner sep=0pt, label=below left:$0$] (0) at (0,0) {};
  \node[fill=black, circle, minimum size=4pt, inner sep=0pt, label=below right:$1$] (1) at (1,0) {};
  \node[fill=black, circle, minimum size=4pt, inner sep=0pt, label=above right:$2$] (2) at (1,1) {};
  \node[fill=black, circle, minimum size=4pt, inner sep=0pt, label=above left:$3$] (3) at (0,1) {};

  \draw (0) -- (1);
  \draw (1) -- (2);
  \draw (2) -- (3);
  \draw (3) -- (0);
\end{tikzpicture}
\end{center}
\caption{4-cycle graph $\Gamma$}\label{4cycle}
\end{figure}
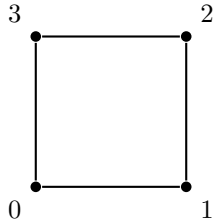

In this section, we fix $t,c>0$ and consider the operator $T=T_{\Gamma,t,c}$ corresponding to this graph.  Thus, we have
\[
T(f_0,f_1,f_2,f_3)=\int\int\int\int f_0(x)f_1(x-u_1)f_2(x-u_2)f_3(x-u_3)\eta(u_1,u_2,u_3)d\sigma_{u_1,u_2}(u_3)d\sigma(u_1)d\sigma(u_2)dx,
\]
where $\eta=\eta_c$ is the bump function in Section \ref{configurationintegrals}, $\sigma$ is the normalized surface measure on the sphere of radius $t$, and $\sigma_{u_1,u_2}$ is the normalized surface measure on the set 
\[
\{u_3:|u_1-u_3|=|u_2-u_3|=t\}.
\]
If $u_3\in\supp\sigma_{u_1,u_2}$, then $(u_1,u_2,u_3)\in\supp \eta$ if and only if $|u_1-u_2|<2\sqrt{t^2-c^2}$, by the picture in Figure \ref{triangle}:

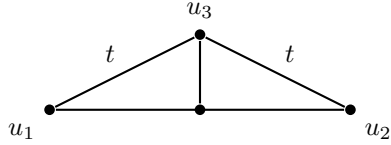
\begin{figure}[h!]
\begin{center}
\begin{tikzpicture}[scale=2, thick]
  \node[fill=black, circle, minimum size=4pt, inner sep=0pt, label=below left:$u_1$] (u_1) at (0,0) {};
  \node[fill=black, circle, minimum size=4pt, inner sep=0pt, label=below right:$u_2$] (u_2) at (2,0) {};
  \node[fill=black, circle, minimum size=4pt, inner sep=0pt, label=above:$u_3$] (u_3) at (1,.5) {};
  \node[fill=black, circle, minimum size=4pt, inner sep=0pt] (m) at (1,0) {};

  \draw (u_1) -- (u_2);
  \draw (u_1) -- (u_3) node[midway, above left] {$t$};
  \draw (u_2) -- (u_3) node[midway, above right] {$t$};
  \draw (m) -- (u_3);
\end{tikzpicture}
\end{center}
\caption{$\text{dist}(u_3,A_3(u_1,u_2))>c$}\label{triangle}
\end{figure}

Therefore, we have the simpler expression
\[
T(f_0,f_1,f_2,f_3)=\int\int\int\int_{|u_1-u_2|<2\sqrt{t^2-c^2}} f_0(x)f_1(x-u_1)f_2(x-u_2)f_3(x-u_3)d\sigma_{u_1,u_2}(u_3)d\sigma(u_1)d\sigma(u_2)dx.
\]

We will need the following calculation.
\begin{lemma}
\label{sphereintersection}
For any $\xi\in \R^d\setminus \{0\}$, we have
\[
\int\int_{|u_1-u_2|<2\sqrt{t^2-c^2}} |\widehat{\sigma}_{u_1,u_2}(\xi)|^2d\sigma(u_1)d\sigma(u_2)\lesssim_{t,c} (1+|\xi|)^{-(d-2)}.
\]
\end{lemma}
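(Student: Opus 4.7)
The plan is to exploit the geometric structure of the intersection surface $S_{u_1,u_2}=\{u_3:|u_3-u_1|=|u_3-u_2|=t\}$, obtain pointwise Fourier decay that depends on the direction $v=u_2-u_1$, and then use rotational symmetry to reduce the double integral to a one-dimensional angular integral.

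First I would parametrize $u_3 = (u_1+u_2)/2 + w$ with $w\perp v$; the distance constraints force $|w|^2 = t^2 - |v|^2/4$, so $w$ ranges over a $(d-2)$-sphere of radius $r(v):=\sqrt{t^2-|v|^2/4}$ inside the hyperplane $v^\perp$. Up to a translation phase (which does not affect modulus), $\widehat{\sigma}_{u_1,u_2}(\xi)$ equals $\widehat{\tau}_{r,v}(\xi)$, where $\tau_{r,v}$ is the normalized surface measure on this $(d-2)$-sphere. Since $\tau_{r,v}$ is supported in a $(d-1)$-dimensional affine subspace, its Fourier transform depends only on the projection $P_v^\perp\xi$ of $\xi$ onto $v^\perp$, and the classical curvature-based Fourier decay estimate for surface measure on $S^{d-2}\subset \mathbb{R}^{d-1}$ (after rescaling by $r$) gives
\[
|\widehat{\sigma}_{u_1,u_2}(\xi)|^2 \lesssim (1+r|P_v^\perp\xi|)^{-(d-2)}.
\]
The constraint $|v|<2\sqrt{t^2-c^2}$ ensures $r>c$, so this becomes $\lesssim_{t,c}(1+|P_v^\perp\xi|)^{-(d-2)}$, a bound that depends on $(u_1,u_2)$ only through $\hat v:=v/|v|$.

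Next I would invoke rotational invariance: since $u_1,u_2$ are independent and uniform on the sphere of radius $t$, their joint law is invariant under simultaneous rotations of $\mathbb{R}^d$, hence the unit vector $\hat v$ is uniformly distributed on $S^{d-1}$. Writing $\theta$ for the angle between $\hat v$ and $\xi/|\xi|$, we have $|P_v^\perp\xi|=|\xi|\sin\theta$, and after enlarging the domain of integration and integrating out $|v|$ (which contributes a finite factor depending only on $t$),
\[
\int\int_{|v|<2\sqrt{t^2-c^2}} (1+|P_v^\perp\xi|)^{-(d-2)}\, d\sigma(u_1)\,d\sigma(u_2) \lesssim_t \int_{S^{d-1}} (1+|\xi|\sin\theta)^{-(d-2)}\, d\hat v.
\]

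Finally I would compute the angular integral using spherical coordinates adapted to $\xi/|\xi|$, so that $d\hat v \simeq \sin^{d-2}\theta\, d\theta\, d\omega$ with $\omega$ parameterizing the orthogonal $S^{d-2}$. Splitting at $\theta\sim 1/|\xi|$ and exploiting symmetry about $\pi/2$, the range $\theta\lesssim 1/|\xi|$ contributes at most $|\xi|^{-(d-1)}$, while on the complementary range $(1+|\xi|\sin\theta)^{-(d-2)}\sin^{d-2}\theta\lesssim |\xi|^{-(d-2)}$, which integrates to a constant multiple of $|\xi|^{-(d-2)}$. This yields the bound $(1+|\xi|)^{-(d-2)}$. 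The main technical point is the Fourier decay estimate for $\tau_{r,v}$ uniformly in the direction $v$, which follows from the rotational covariance of the construction together with the uniform lower bound $r>c$; everything else is direct computation or symmetry.
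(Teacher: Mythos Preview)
Your proof is correct and follows essentially the same strategy as the paper's: both identify $\sigma_{u_1,u_2}$ as the surface measure on a $(d-2)$-sphere of radius bounded below by $c$ lying in a hyperplane orthogonal to $v=u_1-u_2$, obtain the pointwise bound $|\widehat{\sigma}_{u_1,u_2}(\xi)|^2\lesssim_{t,c}(1+|\xi|\sin\theta)^{-(d-2)}$ with $\theta$ the angle between $v$ and $\xi$, and then average over the direction of $v$. The only cosmetic difference is in this last averaging step: the paper fixes $u_2$ and performs a dyadic decomposition $\{u_1:\theta\approx 2^{-j}\}$ with $\sigma$-measure $\lesssim 2^{-j(d-1)}$, whereas you invoke rotational invariance to push forward to the uniform measure on $S^{d-1}$ and compute the resulting angular integral in spherical coordinates; the two computations are term-by-term equivalent.
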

\begin{proof}
Let $V_{u_1,u_2}$ denote the orthogonal complement of the vector $u_1-u_2$.  The measure $\sigma_{u_1,u_2}$ is the normalized surface measure on the intersection of spheres (of radius $t$) centered at $u_1$ and $u_2$.  This intersection is a $(d-2)$-dimensional sphere contained in a hyperplane parallel to $V_{u_1,u_2}$.  Moreover, the radius of this $(d-2)$-sphere is bounded below, with bound depending on $t$ and $c$ but not on $u_1$ or $u_2$.  If $P_{u_1,u_2}:\R^d\to V_{u_1,u_2}$ is the orthogonal projection onto this hyperplane, then
\[
|\widehat{\sigma}_{u_1,u_2}(\xi)|\lesssim_{t,c} (1+|P_{u_1,u_2}(\xi)|)^{-\frac{d-2}{2}}.
\]
If $\theta_{u_1,u_2}(\xi)$ is the angle between the vectors $\xi$ and $u_1-u_2$, then 
\[
|P_{u_1,u_2}(\xi)|=|\xi|\sin\theta_{u_1,u_2}(\xi).
\]
For fixed $u_2$ and $\xi$, let $U_j(u_2,\xi)=\{u_1:2^{-(j+1)}<\theta_{u_1,u_2}(\xi)\leq 2^{-j}\}$.  Then, $U_j(u_1,\xi)$ is a subset of the $(d-1)$-sphere of diameter $\approx 2^{-j}$, hence $\sigma(U_j(u_2,\xi))\lesssim 2^{-j(d-1)}$ and
\begin{align*}
\int\int |\widehat{\sigma}_{u_1,u_2}(\xi)|^2d\sigma(u_1)d\sigma(u_2) &\lesssim_{t,c} \sum_{j=0}^\infty (1+2^{-j}|\xi|)^{-(d-2)}\int\int_{U_j(u_1,\xi)} d\sigma(u_1)d\sigma(u_2) \\
&\lesssim \sum_{j=0}^\infty (1+|\xi|)^{-(d-2)}2^{j(d-2)}2^{-j(d-1)} \\
&\approx (1+|\xi|)^{-(d-2)}.
\end{align*}
\end{proof}
\begin{lemma}
\label{LinfL1}
Let $d\geq 3$.  Suppose $\mu$ is a Frostman probability measure with exponent $s<d$, supported on a compact subset of $\R^d$, with finite energy integral:
\begin{equation}
\label{energy}
I_s(\mu):=\int |\widehat{\mu}(\xi)|^2|\xi|^{-(d-s)}d\xi<\infty.
\end{equation}
Let $\Delta_\e=\mu_{2\e}-\mu_\e$, and let $f\in L^\infty$.  Then, for any $t,c>0$, we have
\[
|T_{t,c}(\mu_\e,\mu_\e,f,\Delta_\e)|\lesssim_{t,c} \e^{\left(-2(d-s)+\frac{(d+2-s)(d-2)}{2(2d-s)}\right)}\|f\|_{L^\infty}.
\]
\end{lemma}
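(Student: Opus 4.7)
The plan is to move to the Fourier side so that the cancellation of $\Delta_\e$ and the decay from Lemma~\ref{sphereintersection} can be exploited simultaneously. I would first apply Plancherel to the $x$-integral in
\[
T=\int\mu_\e(x)\mu_\e(x-u_1)f(x-u_2)\Delta_\e(x-u_3)\,d\sigma_{u_1,u_2}(u_3)d\sigma(u_1)d\sigma(u_2)dx,
\]
converting the four-fold product to a convolution in frequency, and then integrate the resulting phase $e^{-2\pi i\xi_3\cdot u_3}$ against $d\sigma_{u_1,u_2}(u_3)$ to produce $\hat\sigma_{u_1,u_2}(\xi_3)$. After using the $\delta$-function from the $x$-integration to eliminate $\xi_0$, this gives a representation
\[
T=\int\widehat{\Delta_\e}(\xi)\,\Phi(\xi)\,d\xi,
\]
where $\Phi(\xi)$ absorbs the remaining $\hat\mu_\e,\hat f$ factors together with the double sphere-integral of $\hat\sigma_{u_1,u_2}(\xi)$.

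I would then apply Cauchy--Schwarz with a weight $(1+|\xi|)^\alpha$ whose exponent will be optimized at the end:
\[
|T|^2\le \Big(\int|\widehat{\Delta_\e}(\xi)|^2(1+|\xi|)^{-\alpha}d\xi\Big)\Big(\int|\Phi(\xi)|^2(1+|\xi|)^\alpha d\xi\Big).
\]
The first factor is handled using the identity $\widehat{\Delta_\e}(\xi)=\hat\mu(\xi)(\hat\psi(2\e\xi)-\hat\psi(\e\xi))$ with the Taylor/support bound $|\hat\psi(2\e\xi)-\hat\psi(\e\xi)|\lesssim \min(\e|\xi|,1)$, combined with the Frostman dyadic estimate $\int_{|\xi|\sim R}|\hat\mu|^2\lesssim R^{d-s}$ (valid under the hypothesis $I_s(\mu)<\infty$); a short dyadic computation yields a bound of the form $\e^{\alpha-(d-s)}$ for $\alpha$ below the natural threshold $\sim d-s+2$.

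For the second factor, I would apply Cauchy--Schwarz on the $(u_1,u_2)$-sphere integrations inside $\Phi$ and invoke Lemma~\ref{sphereintersection} to extract the factor $(1+|\xi|)^{-(d-2)}$, leaving a double integral over $d\sigma(u_1)d\sigma(u_2)$ of $|\widehat{h_{u_1,u_2}}(\xi)|^2$ with $h_{u_1,u_2}(x)=\mu_\e(x)\mu_\e(x-u_1)f(x-u_2)$. Combining Plancherel on $h_{u_1,u_2}$ with the Frostman bounds $\|\mu_\e\|_\infty\lesssim \e^{-(d-s)}$ and $\|\mu_\e\|_2^2\lesssim \e^{-(d-s)}$ gives a bound of the form $\|f\|_{L^\infty}^2\e^{-3(d-s)}$ times a manageable $\xi$-weight, with the $\|f\|_{L^\infty}$ coming out cleanly from the pointwise estimate $|f(x-u_2)|\le\|f\|_{L^\infty}$.

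Optimizing $\alpha$ then balances the two bounds and yields the claimed exponent. I expect the main technical obstacle to lie in the bookkeeping for the second factor: the specific form $\frac{(d+2-s)(d-2)}{2(2d-s)}$ in the statement indicates that the clean bound coming from taking $\alpha=d-2$ must be refined by interpolating between a pointwise bound on $|\Phi|$ (which uses Lemma~\ref{sphereintersection} to its full strength but loses integrability in $\xi$) and an $L^2$-integrated bound via Plancherel on $h_{u_1,u_2}$ (which is integrable in $\xi$ but forfeits part of the sphere decay). The Frostman-dependent interpolation ratio $(d+2-s)/(2d-s)$ in the statement is precisely the shape one expects from such a trade-off.
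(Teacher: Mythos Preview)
Your strategy works and in fact yields a slightly stronger bound than the lemma claims, but it is packaged differently from the paper and your last paragraph misidentifies where the stated exponent comes from.

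The paper stays on the physical side as long as possible: it bounds $\mu_\e(x-u_1)$ and $f(x-u_2)$ pointwise by $\|\mu_\e\|_{L^\infty}\lesssim\e^{-(d-s)}$ and $\|f\|_{L^\infty}$, applies Cauchy--Schwarz in $x$ against the remaining $\mu_\e(x)$ (costing another $\e^{-(d-s)/2}$), then Cauchy--Schwarz in $(u_1,u_2)$, and only then passes to the Fourier side via Plancherel on $\|\Delta_\e*\sigma_{u_1,u_2}\|_{L^2}$. Your route (Plancherel in $x$ first, weighted Cauchy--Schwarz in $\xi$, then Cauchy--Schwarz in $(u_1,u_2)$ on $\Phi$) is more Fourier-analytic but lands in the same place: both reduce to
\[
|T|^2\ \lesssim\ \e^{-3(d-s)}\|f\|_{L^\infty}^2\int |\widehat{\Delta_\e}(\xi)|^2(1+|\xi|)^{-(d-2)}\,d\xi,
\]
which in your language is exactly the choice $\alpha=d-2$. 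There is no need for the refined ``interpolation between a pointwise bound on $|\Phi|$ and an $L^2$-integrated bound'' that you anticipate; once you set $\alpha=d-2$ the second factor is handled by plain Plancherel on $h_{u_1,u_2}$.

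The specific exponent $\frac{(d+2-s)(d-2)}{2(2d-s)}$ does not come from optimizing your $\alpha$. It comes from how the paper estimates the remaining integral $\int|\widehat{\Delta_\e}(\xi)|^2(1+|\xi|)^{-(d-2)}d\xi$: it splits at a scale $|\xi|=\e^{-\beta}$, drops the weight $(1+|\xi|)^{-(d-2)}$ entirely on the inner region (using only the Lipschitz bound $|\widehat\psi(2\e\xi)-\widehat\psi(\e\xi)|\lesssim\e|\xi|$ there), uses only the weight on the outer region, and optimizes $\beta=(d+2-s)/(2d-s)$. Your dyadic computation, which keeps both the Lipschitz gain and the weight simultaneously, actually gives the sharper bound $\e^{s-2}$ for this integral (when $s<4$; one gets $\e^2$ otherwise), hence $|T|\lesssim \e^{-2(d-s)+(d-2)/2}\|f\|_{L^\infty}$, which is stronger than the lemma and therefore proves it.
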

\begin{proof}
Fix $t,c>0$, and let $T=T_{t,c}$.  For simplicity, all implicit constants will be allowed to depend on $t$ and $c$, so $\lesssim_{t,c}$ will be written simply as $\lesssim$.  Using the bounds $\|\mu_\e\|_{L^\infty}\lesssim \e^{-(d-s)}$ and $\|\mu_\e\|_{L^2}\lesssim \e^{-(d-s)/2}$, we have
\begin{align*}
|T(\mu_\e,\mu_\e,f,\Delta_\e)|&=\left|\int\int\int\int_{|u_1-u_2|<2\sqrt{t^2-c^2}} \mu_\e(x)\mu_\e(x-u_1)f(x-u_2)\Delta_\e(x-u_3)d\sigma_{u_1,u_2}(u_3)d\sigma(u_1)d\sigma(u_2)dx\right| \\
&=\left|\int\int\int_{|u_1-u_2|<2\sqrt{t^2-c^2}} \mu_\e(x)\mu_\e(x-u_1)f(x-u_2)\Delta_\e*\sigma_{u_1,u_2}(x)d\sigma(u_1)d\sigma(u_2)dx\right| \\
&\lesssim \e^{-(d-s)}\|f\|_{L^\infty}\int\int\int_{|u_1-u_2|<2\sqrt{t^2-c^2}} |\mu_\e(x)\Delta_\e*\sigma_{u_1,u_2}(x)|dxd\sigma(u_1)d\sigma(u_2) \\
&\lesssim \e^{-\frac{3}{2}(d-s)}\|f\|_{L^\infty} \int\int_{|u_1-u_2|<2\sqrt{t^2-c^2}} \|\Delta_\e*\sigma_{u_1,u_2}\|_{L^2}d\sigma(u_1)d\sigma(u_2) \\
&= \e^{-\frac{3}{2}(d-s)}\|f\|_{L^\infty} \int\int_{|u_1-u_2|<2\sqrt{t^2-c^2}} \|\widehat{\Delta}_\e\widehat{\sigma}_{u_1,u_2}\|_{L^2}d\sigma(u_1)d\sigma(u_2).
\end{align*}
By applying Cauchy-Schwarz to the last integral, we have
\begin{align*}
|T(\mu_\e,\mu_\e,f,\Delta_\e)|^2&\lesssim \e^{-3(d-s)}\|f\|_{L^\infty}^2\int\int_{|u_1-u_2|<2\sqrt{t^2-c^2}} \|\widehat{\Delta}_\e\widehat{\sigma}_{u_1,u_2}\|_{L^2}^2d\sigma(u_1)d\sigma(u_2) \\
&=\e^{-3(d-s)}\|f\|_{L^\infty}^2\int |\widehat{\Delta}_\e(\xi)|^2\left(\int\int_{|u_1-u_2|<2\sqrt{t^2-c^2}} |\widehat{\sigma}_{u_1,u_2}(\xi)|^2d\sigma(u_1)d\sigma(u_2)\right)d\xi.
\end{align*}
By Lemma \ref{sphereintersection}, this is
\begin{equation}
\label{Deltaintegral}
|T(\mu_\e,\mu_\e,f,\Delta_\e)|^2\lesssim \e^{-3(d-s)}\|f\|_{L^\infty}^2\int|\widehat{\Delta}_\e(\xi)|^2(1+|\xi|)^{-(d-2)}d\xi.
\end{equation}
Recall that 
    \[
    \Delta_\e=\mu_{2\e}-\mu_\e=\mu*(\psi_{2\e}-\psi_\e),
    \]
    hence
    \[
    |\widehat{\Delta_\e}(\xi)|^2=|\widehat{\mu}(\x)|^2|\widehat{\psi}(2\e\x)-\widehat{\psi}(\e\x)|^2.
    \]
We chose $\psi$ so that $\widehat{\psi}$ is supported on the unit ball, so $\widehat{\Delta}_\e$ is supported on $|\xi|<\e^{-1}$.  We bound the integral separately in the regions $|\x|<\e^{-\alpha}$ and $\e^{-\alpha}<|\xi|<\e^{-1}$, optimizing the parameter $\alpha$ later.  By (\ref{energy}), we have
\[
\int_{|\xi|<\e^{-1}}|\widehat{\mu}(\xi)|^2d\xi  =\int_{|\xi|<\e^{-1}}|\widehat{\mu}(\xi)|^2|\xi|^{-(d-s)}|\xi|^{d-s}d\xi \lesssim I_s(\mu)\e^{-(d-s)} \lesssim \e^{-(d-s)}.
\]
Since $\widehat{\psi}$ is Schwarz and therefore Lipschitz, we also have $|\widehat{\psi}(2\e\x)-\widehat{\psi}(\e\x)|\lesssim \e\x$.  Therefore,
    \begin{align*}
&\int_{|\x|<\e^{-\alpha}}|\widehat{\Delta}_\e(\xi)|^2(1+|\xi|)^{-(d-2)}d\xi \\
    \lesssim &\int_{|\x|<\e^{-\alpha}} |\widehat{\Delta_\e}(\x)|^2d\x \\
    \lesssim &\e^{2(1-\alpha)}\int_{|\x|<\e^{-\alpha}}|\widehat{\mu}(\x)|^2d\x \\
    \lesssim &\e^{2(1-\alpha)-\alpha(d-s)}
    \end{align*}
and
\begin{align*}
&\int_{\e^{-\alpha}<\xi<\e^{-1}}|\widehat{\Delta}_\e(\xi)|^2(1+|\xi|)^{-(d-2)}d\xi \\
    \lesssim &\e^{\alpha(d-2)}\int_{\e^{-\alpha}<\xi<\e^{-1}} |\widehat{\Delta_\e}(\x)|^2d\x \\
    \lesssim &\e^{\alpha(d-2)}\int_{\e^{-\alpha}<\xi<\e^{-1}}|\widehat{\mu}(\x)|^2|\widehat{\f}(2\e\x)-\widehat{\f}(\e\x)|^2d\x \\
\lesssim &\e^{\alpha(d-2)}\int_{\xi<\e^{-1}}|\widehat{\mu}(\x)|^2d\x \\
    \lesssim &\e^{\alpha(d-2)-(d-s)}.
\end{align*}
Setting $\alpha=\frac{d+2-s}{2d-s}$ and plugging the resulting bound into (\ref{Deltaintegral}) gives
\[
|T(\mu_\e,\mu_\e,f,\Delta_\e)|^2\lesssim_{t,c}\e^{-3(d-s)}\|f\|_{L^\infty}^2\e^{\frac{(d+2-s)(d-2)}{2d-s}-(d-s)},
\]
or
\[
|T(\mu_\e,\mu_\e,f,\Delta_\e)|\lesssim_{t,c} \e^{-2(d-s)+\frac{(d+2-s)(d-2)}{2(2d-s)}}\|f\|_{L^\infty}.
\]
\end{proof}
We will use Lemma \ref{LinfL1} directly in our proof of Lemma \ref{mainupperbound}, which is the upper bound used in the proof of Theorem \ref{main}.  It also gives the following bound for the configuration integral of the $4$-cycle, which is used in the proof of Theorem \ref{4cycle}.
\begin{lemma}
\label{4cycleupperbound}
Let $d\geq 3$, let $t,c>0$, and let $\mu$ be a compactly supported Frostman probability measure with exponent $s$, with finite energy integral (\ref{energy}).  If
\begin{equation}
\label{4cycleexponent}
s>\frac{17d+2-\sqrt{25d^2+68d-92}}{12},
\end{equation}
then $\Lambda_{\Gamma,t,c}\lesssim 1$.
\end{lemma}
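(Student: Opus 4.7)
The plan is to establish the uniform bound $T_{\Gamma,t,c}\mu^\e \lesssim 1$ by dyadic telescoping, and then apply Lemma \ref{equivalence} to conclude $\Lambda_{\Gamma,t,c}\mu \lesssim 1$. Write $F(\e) = T_{\Gamma,t,c}(\mu^\e,\mu^\e,\mu^\e,\mu^\e)$. Because $\mu^1 = \mu*\psi^1$ is bounded and compactly supported, the endpoint $F(1)$ is finite. For a dyadic sequence $\e_k = 2^{-k}$ we have
\[
F(\e) = F(1) + \sum_{k \geq 1}\bigl[F(\e_k) - F(\e_{k-1})\bigr],
\]
so it suffices to dominate each telescoping difference by a quantity that is geometrically summable in $k$.

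Expanding by multilinearity and using $\mu^{\e_k} - \mu^{\e_{k-1}} = -\Delta^{\e_k}$, the difference $F(\e_k) - F(\e_{k-1})$ splits into four terms of the form $T_{\Gamma,t,c}(g_0,g_1,g_2,g_3)$, each of which has exactly one $g_i$ equal to $-\Delta^{\e_k}$ and the remaining three drawn from $\{\mu^{\e_k},\mu^{\e_{k-1}}\}$. I would then argue that the dihedral automorphism group $D_4$ of the $4$-cycle $\Gamma$ lifts to measure-preserving linear changes of variables on $(x,u_1,u_2,u_3)$, realized concretely by re-centering the configuration integral at any vertex of $\Gamma$ (for example, the substitution $y = x - u_3$, $w_0 = -u_3$, $w_i = u_i - u_3$ for $i=1,2$ moves the center from vertex $0$ to vertex $3$ and has unit Jacobian, while the distance constraints on the new variables reproduce the original structure). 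Using these symmetries, each of the four terms can be rewritten so that $\Delta^{\e_k}$ sits in the $f_3$-slot and one $\mu$-factor sits in the $f_2$-slot. Lemma \ref{LinfL1} then applies with the $f_2$-slot factor playing the role of $f$, and $\|f\|_\infty \lesssim \e_k^{-(d-s)}$, giving
\[
\bigl|T_{\Gamma,t,c}(g_0,g_1,g_2,g_3)\bigr| \lesssim \e_k^{-2(d-s) + \frac{(d+2-s)(d-2)}{2(2d-s)}}\cdot \e_k^{-(d-s)} = \e_k^{\delta(s,d)},
\]
where $\delta(s,d) = -3(d-s) + \frac{(d+2-s)(d-2)}{2(2d-s)}$.

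A direct calculation identifies the condition $\delta(s,d) > 0$ with the quadratic inequality $6s^2 - (17d+2)s + 11d^2 + 4 < 0$, whose discriminant is $(17d+2)^2 - 24(11d^2+4) = 25d^2 + 68d - 92$. The smaller root is therefore $s_0 = \frac{17d+2-\sqrt{25d^2+68d-92}}{12}$, matching the hypothesis of the lemma (the upper root is automatically satisfied since $s<d$ and one checks that the quadratic is negative at $s=d$). Under this hypothesis the telescoping sum is bounded by $\sum_{k\geq 1} 2^{-k\delta(s,d)} \lesssim 1$, giving $F(\e)\lesssim 1$ uniformly in $\e$, and Lemma \ref{equivalence} together with the weak-$*$ argument of Lemma \ref{Borelmeasurelemma} then yields $\Lambda_{\Gamma,t,c}\mu \lesssim 1$.

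The most delicate step is the symmetry reduction: Lemma \ref{LinfL1} is proved with $\Delta^\e$ specifically in the fourth slot, where it naturally pairs with $\sigma_{u_1,u_2}$ via convolution, whereas the telescoping produces $\Delta^\e$ in all four positions. Verifying that re-centering the configuration integral at vertices other than vertex $0$ really does transform the measure $d\sigma(u_1)\,d\sigma(u_2)\,d\sigma_{u_1,u_2}(u_3)$ and the cutoff $\eta$ back into their original form (up to a compatible relabeling of the $f_i$) is a straightforward but tedious change-of-variables computation that should be executed with care, since without it one cannot apply Lemma \ref{LinfL1} to three of the four telescoping terms.
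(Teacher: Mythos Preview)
Your proposal is correct and follows essentially the same route as the paper's own proof: telescope $T_{\Gamma,t,c}\mu^\e$ along a dyadic scale, use the $D_4$ symmetry of the $4$-cycle to reduce each of the four multilinear difference terms to the configuration handled by Lemma~\ref{LinfL1}, insert $\|\mu^\e\|_\infty\lesssim\e^{-(d-s)}$ for the $f$-slot, and sum the resulting geometric series. The paper compresses the symmetry step into the phrase ``by symmetry,'' whereas you spell out the vertex re-centering explicitly and also carry out the quadratic algebra identifying the threshold with the positivity of the exponent; both additions are welcome but do not change the argument.
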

\begin{remark}
The threshold in (\ref{4cycleexponent}) is smaller than the quantity $s_d$ defined in the statement of Theorem \ref{main}.
\end{remark}
\begin{proof}
Let $\Delta_\e$ be as defined in the statement of Lemma \ref{LinfL1}.  We use the telescoping identity
\begin{align*}
T\mu_{2\e}-T\mu_\e=&T(\Delta_\e,\mu_\e,\mu_\e,\mu_\e) \\
&+T(\mu_{2\e},\Delta_\e,\mu_\e,\mu_\e) \\
&+T(\mu_{2\e},\mu_{2\e},\Delta_\e,\mu_\e) \\
&+T(\mu_{2\e},\dots,\mu_{2\e},\Delta_\e).
\end{align*}
By symmetry, each term satisfies the bound in Lemma \ref{LinfL1} with $f=\mu_\e$, giving
\[
|T\mu_{2\e}-T\mu_\e|\lesssim \e^{-3(d-s)+\frac{(d+2-s)(d-2)}{2(2d-s)}}.
\]
If $s$ satisfies (\ref{4cycleexponent}) then the exponent of $\e$ above is positive, hence the telescoping series
\[
|T\mu_\e|\leq\sum_{j=0}^\infty |T\mu_{2^{-j+1}\e}-T\mu_{2^{-j}\e}|
\]
converges.  By Lemma \ref{equivalence}, it follows that $\Lambda_{\Gamma,t,c}\lesssim 1$.
\end{proof}
\section{Proofs of Theorems \ref{main}, \ref{4cycle}, and \ref{mainchain}}
\label{mainproof}
\subsection{Initial reductions and outline of section}

Let $\cG_3$ be the graph corresponding to three shattered points as defined in Section \ref{notation}.  Because we are going to look at configurations in $\R^d$ corresponding to this graph, it will be useful to use slightly different notation than is used in the definition.  Let $\{x_1,x_2,x_3\}$ be the points which are being shattered, and let
\[
\{y_\varnothing,y_1,y_2,y_3,y_{12},y_{23},y_{13},y_{123}\}
\]
be the centers of the circles.  In this notation, $\mathcal{G}_3$ is as shown in Figure \ref{shatter3}.  Let $G$ be the subgraph obtained from $\mathcal{G}_3$ by removing vertices of degree 0 and 1, and let $H$ be the subgraph of $G$ with vertices $\{x_1,x_2,y_{12},y_{13},y_{123}\}$; again, these are shown in Figure \ref{shatter3}.  For the remainder of this section, we will work with configuration integrals defined with respect to the graph $G$.  Let $E$ be a compact set, and let $\mu$ be a Frostman probability measure on $E$ with exponent $s<\dim E$.  Our goal is to obtain upper and lower bounds on the configuration integrals $T_{G,c,t}\mu_\e$ which do not depend on $\e$.  In Subsection \ref{upperbound}, we prove the upper bound.  The starting point is a technique introduced by the first two authors in \cite{IM20}.  This does not apply directly to the graphs under consideration, so we combine this approach with an interpolation argument which exploits the symmetry of the graph $\mathcal{G}_3$.  In Subsection \ref{lowerbound}, we obtain a lower bound using novel techniques which exploit the symmetry of the graph $G$.  In Subsection \ref{proofmt}, we verify that ``degenerate'' configurations do not dominate our estimates, and complete the proof of the theorem. 

\begin{figure}[h!]
\centering
\begin{minipage}{\textwidth}
  \centering
\begin{tikzpicture}[scale=2, thick]
  \node[fill=black, circle, minimum size=4pt, inner sep=0pt, label=below:$y_{123}$] (y123) at (0,0) {};
  \node[fill=black, circle, minimum size=4pt, inner sep=0pt, label=below left:$x_2$] (x2) at (0,1) {};
  \node[fill=black, circle, minimum size=4pt, inner sep=0pt, label=right:$x_1$] (x1) at (-.707,.707) {};
  \node[fill=black, circle, minimum size=4pt, inner sep=0pt, label=above right:$x_3$] (x3) at (.707,.707) {};
  \node[fill=black, circle, minimum size=4pt, inner sep=0pt, label=above left:$y_{12}$] (y12) at (-.707,1.707) {};
  \node[fill=black, circle, minimum size=4pt, inner sep=0pt, label=above right:$y_{23}$] (y23) at (.707,1.707) {};
  \node[fill=black, circle, minimum size=4pt, inner sep=0pt, label=above:$y_{13}$] (y13) at (0,1.414) {};
  \node[fill=black, circle, minimum size=4pt, inner sep=0pt, label=above left:$y_1$] (y1) at (-1.707,.707) {};
  \node[fill=black, circle, minimum size=4pt, inner sep=0pt, label=above right:$y_3$] (y3) at (1.707,.707) {};
  \node[fill=black, circle, minimum size=4pt, inner sep=0pt, label=below right:$y_2$] (y2) at (.707,.293) {};
  \node[fill=black, circle, minimum size=4pt, inner sep=0pt, label=above left:$y_\varnothing$] (y0) at (-1.707,1.707) {};
  \node at (0,2) {\LARGE $\mathcal{G}_3$};

  \draw (y123) -- (x1);
  \draw (y123) -- (x2);
  \draw (y123) -- (x3);
  \draw (y12) -- (x1);
  \draw (y12) -- (x2);
    \draw (y13) -- (x1);
    \draw (y13) -- (x3);
    \draw (y23) -- (x2);
    \draw (y23) -- (x3);
    \draw (y1) -- (x1);
    \draw (y2) -- (x2);
    \draw (y3) -- (x3);
\end{tikzpicture}
\end{minipage}%
\\
\vspace{.5cm}
\begin{minipage}{.33\textwidth}
  \centering
\begin{tikzpicture}[scale=2, thick]
  \node[fill=black, circle, minimum size=4pt, inner sep=0pt, label=below:$y_{123}$] (y123) at (0,0) {};
  \node[fill=black, circle, minimum size=4pt, inner sep=0pt, label=below left:$x_2$] (x2) at (0,1) {};
  \node[fill=black, circle, minimum size=4pt, inner sep=0pt, label=right:$x_1$] (x1) at (-.707,.707) {};
  \node[fill=black, circle, minimum size=4pt, inner sep=0pt, label=above right:$x_3$] (x3) at (.707,.707) {};
  \node[fill=black, circle, minimum size=4pt, inner sep=0pt, label=above left:$y_{12}$] (y12) at (-.707,1.707) {};
  \node[fill=black, circle, minimum size=4pt, inner sep=0pt, label=above right:$y_{23}$] (y23) at (.707,1.707) {};
  \node[fill=black, circle, minimum size=4pt, inner sep=0pt, label=above:$y_{13}$] (y13) at (0,1.414) {};  \node at (0,2) {\LARGE $G$};

  \draw (y123) -- (x1);
  \draw (y123) -- (x2);
  \draw (y123) -- (x3);
  \draw (y12) -- (x1);
  \draw (y12) -- (x2);
    \draw (y13) -- (x1);
    \draw (y13) -- (x3);
    \draw (y23) -- (x2);
    \draw (y23) -- (x3);

\end{tikzpicture}
\end{minipage}
\begin{minipage}{.33\textwidth}
  \centering
\begin{tikzpicture}[scale=2, thick]
  \node[fill=black, circle, minimum size=4pt, inner sep=0pt, label=below:$y_{123}$] (y123) at (0,0) {};
  \node[fill=black, circle, minimum size=4pt, inner sep=0pt, label=below left:$x_2$] (x2) at (0,1) {};
  \node[fill=black, circle, minimum size=4pt, inner sep=0pt, label=right:$x_1$] (x1) at (-.707,.707) {};

  \node[fill=black, circle, minimum size=4pt, inner sep=0pt, label=above left:$y_{12}$] (y12) at (-.707,1.707) {};

  \node[fill=black, circle, minimum size=4pt, inner sep=0pt, label=above:$y_{13}$] (y13) at (0,1.414) {};
    \node at (-.5,2) {\LARGE $H$};

  \draw (y123) -- (x1);
  \draw (y123) -- (x2);

  \draw (y12) -- (x1);
  \draw (y12) -- (x2);
    \draw (y13) -- (x1);

\end{tikzpicture}
\end{minipage}
\caption{The graphs $\cG_3, G, H$.}
  \label{shatter3}
\end{figure}

\subsection{The upper bound}
\label{upperbound}
Throughout this subsection, we fix $c,t>0$ and suppress them from the notation (for example, writing $T_G$ instead of $T_{G,c,t}$).  Let $G$ and $H$ be as defined above.  We will also use the notation $s_d$ as defined in the statement of Theorem \ref{main}.
\begin{lemma}
\label{mainupperbound}
Let $d\geq 3$, and let $\mu$ be a Frostman probability measure on with exponent $s$, supported on a compact subset of $\R^d$.  Then,
\[
|T_G\mu_{2\e}-T_G\mu_\e|\lesssim \e^{-5(d-s)+\frac{(d+2-s)(d-2)}{2(2d-s)}}.
\]
If $s>s_d$, then $|T_G\mu_\e|\lesssim 1$.
\end{lemma}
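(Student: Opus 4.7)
The plan is to mirror the telescoping framework used in the proof of Lemma \ref{4cycleupperbound}. Writing $\Delta_\e = \mu_{2\e} - \mu_\e$, I would decompose
\[
T_G\mu_{2\e} - T_G\mu_\e = \sum_{j=1}^{7} T_G^{(j)},
\]
where in the $j$-th term the vertex indexed by $j$ carries $\Delta_\e$ and the other vertices of $G$ carry $\mu_{2\e}$ or $\mu_\e$ depending on whether their index is less than or greater than $j$. The task reduces to bounding each $|T_G^{(j)}|$ by $\e^{-5(d-s)+\frac{(d+2-s)(d-2)}{2(2d-s)}}$; the second statement then follows by summing the dyadic telescoping series, which converges because $s>s_d$ makes this exponent positive (by construction of $s_d$ as the smaller root of $10s^2-(29d+2)s+(19d^2+4)$, as can be verified directly).

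For each term I would choose a $4$-cycle $C_j\subset G$ containing the $\Delta_\e$-vertex. Every vertex of $G$ lies in at least one of the three $4$-cycles $\{x_1,y_{12},x_2,y_{123}\}$, $\{x_1,y_{13},x_3,y_{123}\}$, $\{x_2,y_{23},x_3,y_{123}\}$, so such a cycle is always available. After restricting $\mu$ to the subset $E_0$ from Proposition \ref{IT19}, I would integrate out the three non-cycle vertices in a carefully chosen order. For example, when $C_j = \{x_1,y_{12},x_2,y_{123}\}$, I would integrate $y_{13}$ and $y_{23}$ first: each is bounded using $\|\mu_\e\|_\infty \lesssim \e^{-(d-s)}$ and then the uniformly bounded convolution $\sigma_t^\e \ast \sigma_t^\e$ (which requires $d \geq 3$), together contributing a factor of $\e^{-2(d-s)}$. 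At this point $x_3$ has a single remaining edge (to $y_{123}$), so Proposition \ref{IT19} yields $\int \mu_\e(x_3)\sigma_t^\e(x_3-y_{123})\,dx_3 = \sigma_t^\e \ast \mu_\e(y_{123}) \lesssim 1$, contributing no extra factor of $\e^{-(d-s)}$. This is the crucial improvement over the naive $\e^{-3(d-s)}$ that would result from three $L^\infty$ estimates.

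The remaining integral over $C_j$ has $\Delta_\e$ at one vertex and $\mu_\e$ at the other three. By the symmetry of $\Gamma$ allowing $\Delta_\e$ to be placed at any of the four positions, Lemma \ref{LinfL1} (applied with $f = \mu_\e$) yields a bound of $\e^{-3(d-s)+\frac{(d+2-s)(d-2)}{2(2d-s)}}$. Combining with the $\e^{-2(d-s)}$ factor from the non-cycle integrations gives the claimed estimate on $|T_G^{(j)}|$.

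The main technical obstacle is verifying that this scheme can be executed uniformly over all seven telescoping positions. Concretely, one must check that for each placement of the $\Delta_\e$-vertex there exists a $4$-cycle $C_j$ and an integration order for the three non-cycle vertices so that the last one integrated has only a single remaining edge to the cycle, thereby enabling the Proposition \ref{IT19} trick. This is the role of the ``interpolation'' step indicated in the outline: the $S_3$-symmetry of $\cG_3$ under permutations of $\{x_1,x_2,x_3\}$ reduces the seven positions to just three inequivalent types (corresponding to $\Delta_\e$ sitting at a shattered point $x_i$, at a pairwise center $y_{ij}$, or at the universal center $y_{123}$), each of which can be handled by a direct computation analogous to the example above, with the ``free'' vertex at the end of the scheme being $x_3$, $x_2$, or one of the shattered points respectively.
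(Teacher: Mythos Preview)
Your route differs substantially from the paper's: rather than peeling off non-cycle vertices directly, the paper exploits the decomposition of $G$ as two copies of $H$ glued along three common vertices, writes $T_G(\mu_{2\e},\ldots,f,\Delta_\e)=\|Jf\|_{L^2(\nu)}^2$ for a linear operator $J$ (with $\Delta_\e$ built into the definition of $J$), and then applies Riesz--Thorin between the bound $\|J\|_{L^\infty\to L^1(\nu)}\lesssim\e^{-2(d-s)+\frac{(d+2-s)(d-2)}{2(2d-s)}}$ (which is $T_H$, controlled by Lemmas \ref{deforestation} and \ref{LinfL1}) and the trivial $\|J\|_{L^\infty\to L^\infty}\lesssim\e^{-(d-s)}$.

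Your direct scheme has a genuine gap. The assertion that $\sigma_t^\e\ast\sigma_t^\e$ is uniformly bounded for $d\ge 3$ is false: one has $(\sigma_t\ast\sigma_t)(x)\sim|x|^{-1}$ near $x=0$ in every dimension $d\ge 2$ (for instance in $d=3$ a direct computation gives $(\sigma\ast\sigma)(x)=c/|x|$ on $0<|x|<2$). After your $L^\infty$ bound on $\mu_\e(y_{13})$ you are therefore left with a factor $(\sigma_t^\e\ast\sigma_t^\e)(x_1-x_3)$ that blows up like $\e^{-1}$ when $x_1\approx x_3$, and the non-degeneracy set $N_c$ does not forbid this coincidence; hence the $x_3$-integral is not simply $\sigma_t^\e\ast\mu_\e(y_{123})$ as you claim. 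Even if you switch to the $T$ formulation of Section \ref{configurationintegrals} (where the $u_5$-integral is against the normalized measure $d\sigma_{t;u_1,u_3}$ of total mass one, sidestepping the first problem), there is a second obstacle: bounding the non-negative non-cycle factors from above forces you to take absolute values inside the remaining $4$-cycle integral, so $\Delta_\e$ becomes $|\Delta_\e|$ and Lemma \ref{LinfL1}---whose proof depends on the Fourier cancellation in $\widehat{\Delta}_\e$---no longer applies as a black box. The paper's interpolation device is precisely what avoids this sign issue, since $\Delta_\e$ sits inside the linear operator $J$ and the interpolation runs over the separate $f$-slot.
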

\begin{proof}
Let $\Delta_\e=\mu_{2\e}-\mu_\e$.  We use the telescoping identity
\begin{align*}
T_G\mu_{2\e}-T_G\mu_\e=&T_G(\Delta_\e,\mu_\e,\dots,\mu_\e) \\
&+T_G(\mu_{2\e},\Delta_\e,\mu_\e,\dots,\mu_\e) \\
&+\cdots \\
&+T_G(\mu_{2\e},\dots,\mu_{2\e},\Delta_\e).
\end{align*}
We will bound the last term on the right (all cases are similar).  Define a measure $\nu$ by
\[
d\nu(x,u_1,u_2)=\mu_{2\e}(x)\mu_{2\e}(x-u_1)\mu_{2\e}(x-u_2)dx d\sigma(u_1) du_2.
\]
Let $\sigma_{u_2,u_3}$ be as in Section\ref{4cyclesection} (note that the numbering of the vertices has changed), and define a linear operator $J$ by
\[
Jf(x,u_1,u_2)=\int\int f(x-u_3)\Delta_\e(x-u_4)d\sigma_{u_1,u_3}(u_4)d\sigma_{u_2}(u_3).
\]
The motivation for these choices is that we have
\begin{equation}
\label{L2TG}
\|Jf\|_{L^2(\nu)}^2=T_G(\mu_{2\e},\dots,\mu_{2\e},f,\Delta_\e)
\end{equation}
and
\begin{equation}
\label{L1TH}
\|Jf\|_{L^1(\nu)}=T_H(\mu_{2\e},\mu_{2\e},\mu_{2\e},f,\Delta_\e).
\end{equation}
By (\ref{L1TH}) and Theorems \ref{deforestation} and \ref{LinfL1}, we have 
\[
\|J\|_{L^{\infty}\to L^1(\nu)}\lesssim \e^{-2(d-s)+\frac{(d+2-s)(d-2)}{2(2d-s)}}.
\]
From the bound $\|\mu_\e\|_{L^\infty}\lesssim \e^{-(d-s)}$, we have 
\[
\|J\|_{L^\infty\to L^\infty}\lesssim \e^{-(d-s)}.
\]
By Riesz-Thorin, it follows that
\[
\|J\|_{L^{\infty}\to L^2(\nu)}\lesssim \left(\e^{-2(d-s)+\frac{(d+2-s)(d-2)}{2(2d-s)}}\right)^{1/2}\left(\e^{-(d-s)}\right)^{1/2}=\e^{-\frac{3}{2}(d-s)+\frac{(d+2-s)(d-2)}{4(2d-s)}}.
\]
This, together with (\ref{L2TG}) and the bound $\|\mu_\e\|_{L^\infty}\lesssim \e^{-(d-s)}$, gives
\begin{align*}
T_G\mu_{2\e}-T_G\mu_\e &\lesssim \|J\mu_{2\e}\|_{L^2(\nu)}^2 \\
&\lesssim \left(\e^{-\frac{3}{2}(d-s)+\frac{(d+2-s)(d-2)}{4(2d-s)}}\right)^2\|\mu_{2\e}\|_{L^\infty}^2 \\
&\lesssim \e^{-5(d-s)+\frac{(d+2-s)(d-2)}{2(2d-s)}}.
\end{align*}
If $s_d<s\leq d$, then
\[
-5(d-s)+\frac{(d+2-s)(d-2)}{2(2d-s)}>0.
\]
Therefore, we have an upper bound on $T_G\mu_\e$ by the telescoping identity $T_G\mu_\e=\sum_j T_G\mu_{2^{-j}\e}-T_G\mu_{2^{-j-1}\e}$.
\end{proof}
For technical reasons which will arise later, we will also need the following related bound.
\begin{lemma}
\label{Bupperbound}
Let $d\geq 3$, and let $B$ be the graph shown in Figure \ref{book}.  If $s>s_d$, then $|T_B\mu_\e|\lesssim 1$.
\end{lemma}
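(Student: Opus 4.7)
The plan is to follow the same telescoping-plus-interpolation strategy used in the proof of Lemma \ref{mainupperbound}, adapted to the graph $B$. First I would write the telescoping identity
\[
T_B\mu_{2\e}-T_B\mu_\e = \sum_{v} T_B(\mu_{2\e},\ldots,\mu_{2\e},\Delta_\e,\mu_\e,\ldots,\mu_\e),
\]
with $\Delta_\e=\mu_{2\e}-\mu_\e$ placed at the vertex $v$. Exploiting whatever symmetries $B$ possesses, it is enough to bound a representative term, choosing $v$ to be a vertex that appears in a $4$-cycle of $B$ (so that the interpolation scheme used for $G$ can be replicated).

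Next, I would construct an auxiliary measure $\nu$ on a product of copies of $\R^d$ indexed by a suitable subset of $B$'s vertices (those that remain after peeling off a $4$-cycle containing $v$), weighted by the $\mu_{2\e}$-factors sitting at those vertices and by the sphere measures joining them. The linear operator $J$, acting on the slot occupied by $f$, should be defined via the iterated sphere integrations associated with the $4$-cycle that contains both $v$ (the $\Delta_\e$-slot) and $f$, so that
\[
\|Jf\|_{L^2(\nu)}^2 = T_B(\mu_{2\e},\ldots,f,\ldots,\Delta_\e),
\qquad
\|Jf\|_{L^1(\nu)} = T_{B^\flat}(\mu_{2\e},\ldots,f,\ldots,\Delta_\e),
\]
where $B^\flat$ is obtained from $B$ by deleting one of the two copies of the chosen $4$-cycle. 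The point is that $B^\flat$ reduces, after applying the deforestation Lemma \ref{deforestation} to its degree-$1$ vertices, to the single $4$-cycle $\Gamma$, for which Lemma \ref{LinfL1} supplies the bound
\[
\|J\|_{L^\infty\to L^1(\nu)} \lesssim \e^{-2(d-s)+\frac{(d+2-s)(d-2)}{2(2d-s)}}.
\]

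Combined with the trivial pointwise bound $\|J\|_{L^\infty\to L^\infty}\lesssim \e^{-(d-s)}$ (from $\|\mu_\e\|_{L^\infty}\lesssim \e^{-(d-s)}$), Riesz--Thorin interpolation yields
\[
\|J\|_{L^\infty\to L^2(\nu)} \lesssim \e^{-\tfrac{3}{2}(d-s)+\tfrac{(d+2-s)(d-2)}{4(2d-s)}}.
\]
Applying this with $f=\mu_\e$, squaring, and absorbing the remaining $\|\mu_{2\e}\|_{L^\infty}$ factors produces a bound of the form $|T_B\mu_{2\e}-T_B\mu_\e|\lesssim \e^{-N(d-s)+\frac{(d+2-s)(d-2)}{2(2d-s)}}$ where $N$ depends on the number of surviving $\mu$-factors in $B$. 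Since $s_d$ was precisely chosen so that this exponent is positive (as in Lemma \ref{mainupperbound}), the telescoping series $T_B\mu_\e = T_B\mu_1 + \sum_j (T_B\mu_{2^{-j-1}\e}-T_B\mu_{2^{-j}\e})$ converges and gives $|T_B\mu_\e|\lesssim 1$.

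The main obstacle is the combinatorial task of locating a $4$-cycle in $B$ that passes through the $\Delta_\e$-slot and then verifying that when the operator $J$ is built from that cycle, the complementary graph $B^\flat$ that appears in the $L^1$-norm really does deforest to a single $4$-cycle (rather than to something more complex). For the book graph, whose pages are $4$-cycles glued along a common spine, this is exactly the role that the graph $H$ played for $G$: peeling off one page leaves a graph whose non-pendant core is a single $4$-cycle. Provided this combinatorial picture holds for $B$, the analytic machinery above is already in place and the same threshold $s_d$ suffices.
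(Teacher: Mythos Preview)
Your overall strategy matches the paper's exactly: telescope, exploit the symmetry of $B$ to set up an interpolation between $L^1(\nu)$ and $L^\infty$, and reduce the $L^1$ endpoint to the $4$-cycle bound of Lemma~\ref{LinfL1}. The only slip is in where you place the auxiliary measure $\nu$. It must live on the \emph{spine} $\{0,1\}$ only, with $J$ integrating over the non-spine vertices $\{2,3\}$ of one page (carrying $f$ and $\Delta_\e$). Then $\|Jf\|_{L^2(\nu)}^2$ duplicates that page and recovers the two-page book $B$, while $\|Jf\|_{L^1(\nu)}=T_\Gamma(\mu_{2\e},\mu_{2\e},f,\Delta_\e)$ is the $4$-cycle directly---no deforestation is required. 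If instead $\nu$ lives on ``what remains after peeling off a $4$-cycle containing $v$'' (either $\{2',3'\}$ or the full other page $\{0,1,2',3'\}$), the identities fail: in the latter case $\|Jf\|_{L^2(\nu)}^2$ becomes a three-page book and $\|Jf\|_{L^1(\nu)}$ is $T_B$ itself, so nothing is gained.

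With the corrected $\nu$ the numerics are identical to Lemma~\ref{mainupperbound}: the same $L^\infty\to L^1$ and $L^\infty\to L^\infty$ bounds on $J$, the same interpolated $L^\infty\to L^2$ bound, and after squaring and inserting $\|\mu_{2\e}\|_{L^\infty}^2$ one obtains $|T_B\mu_{2\e}-T_B\mu_\e|\lesssim \e^{-5(d-s)+\frac{(d+2-s)(d-2)}{2(2d-s)}}$ with $N=5$ exactly, so $s>s_d$ suffices. Your hedge that ``$N$ depends on the number of surviving $\mu$-factors'' is unnecessary once $\nu$ is placed correctly.
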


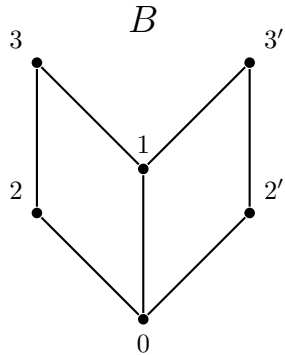
\begin{figure}[h!]

  \centering
\begin{tikzpicture}[scale=2, thick]
  \node[fill=black, circle, minimum size=4pt, inner sep=0pt, label=below:$0$] (y123) at (0,0) {};
  \node[fill=black, circle, minimum size=4pt, inner sep=0pt, label=above:$1$] (x2) at (0,1) {};
  \node[fill=black, circle, minimum size=4pt, inner sep=0pt, label=above left:$2$] (x1) at (-.707,.707) {};
  \node[fill=black, circle, minimum size=4pt, inner sep=0pt, label=above right:$2'$] (x3) at (.707,.707) {};
  \node[fill=black, circle, minimum size=4pt, inner sep=0pt, label=above left:$3$] (y12) at (-.707,1.707) {};
  \node[fill=black, circle, minimum size=4pt, inner sep=0pt, label=above right:$3'$] (y23) at (.707,1.707) {};
 \node at (0,2) {\LARGE $B$};

  \draw (y123) -- (x1);
  \draw (y123) -- (x2);
  \draw (y123) -- (x3);
  \draw (y12) -- (x1);
  \draw (y12) -- (x2);

    \draw (y23) -- (x2);
    \draw (y23) -- (x3);

\end{tikzpicture}

\caption{The "book" graph $B$.}\label{book}

\end{figure}

\begin{proof}
The proof is essentially the same as the proof of Lemma \ref{mainupperbound}.  This time, we consider the measure
\[
d\nu(x,u_1)=\mu_{2\e}(x)\mu_{2\e}(x-u_1)d\sigma(u_1)dx
\]
and the operator
\[
Jf(x,u_1)=\int\int f(x-u_2)\Delta_\e(x-u_3).
\]
With this choice, we have
\[
\|Jf\|_{L^2(\nu)}^2=T_B(\mu_{2\e},\dots,\mu_{2\e},f,\Delta_\e)
\]
and
\[
\|Jf\|_{L^1(\nu)}=T_\Gamma(\mu_{2\e},\mu_{2\e},\mu_{2\e},f,\Delta_\e),
\]
where $\Gamma$ denotes the $4$-cycle graph (i.e., the graph $H$ with the degree $1$ vertex removed).  The same interpolation argument as in the proof of Lemma \ref{mainupperbound}, with the same bounds, yields the result.
\end{proof}
\subsection{The lower bound}
\label{lowerbound}
The lower bound will be proved by making a series of reductions.  We have two basic tools at our disposal.  First, applying Lemma \ref{deforestation}, we may remove any vertices of degree 1 (or 0), and it is equivalent to bound the integral corresponding to the remaining graph.  We combine this with a Cauchy-Schwarz argument that allows us to exploit the symmetry of the graphs which arise when considering VC-dimension.  Before getting to the estimates, we first describe the symmetry of $G$ which we will exploit repeatedly.  The vertices of $G$ can be partitioned into parts $\{0,1,2\}\cup\{3,4\}\cup\{5,6\}$, and the adjacencies of $\{3,4\}$ with $\{1,2,3\}$ and each other are mirrored by $\{5,6\}$.  Therefore, the graph $G$ can be viewed as two copies of the graph $H$, ``glued together'' on the common vertices $\{1,2,3\}$.  The graph $H$ has a similar symmetry, once we have removed the vertex of degree 1.  It can be identified with two copies of the $2$-chain, glued together at the endpoints.
\begin{lemma}[Reduction from $G$ to $H$]
\label{lowerboundGtoH}
Let $G$ and $H$ be the graphs defined in the beginning of this section (and shown in Figure \ref{shatter3}), and let $\mu$ be a compactly supported probability measure.  For any $c>0$ and any $t,\e>0$ such that $\sigma_t^\e*\mu\approx 1$ on the support of $\mu$, we have
\[
\Lambda_{G,t,c}^\e\mu\gtrsim \left(\Lambda_{H,t,c}^\e\mu\right)^2.
\]
Here, $\Lambda$ is the operator defined in (\ref{Lambda}).
\end{lemma}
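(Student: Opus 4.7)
The plan is to prove this inequality by a Cauchy--Schwarz argument exploiting the fact that $G$ can be decomposed as two copies of $H$ glued along a three-vertex core.

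First I would identify this decomposition. Inspection of Figure \ref{shatter3} shows that the partition of the seven vertices of $G$ into $\{y_{123},y_{13},x_2\}\cup\{x_1,y_{12}\}\cup\{x_3,y_{23}\}$ has the property that the induced subgraph on the first two parts is exactly $H$, and the induced subgraph on the first and third parts is isomorphic to $H$ via the swaps $x_1\leftrightarrow x_3$ and $y_{12}\leftrightarrow y_{23}$. These two copies of $H$ share all three ``core'' vertices. Since $H$ contains exactly one edge entirely inside the core (the edge $y_{123}x_2$) and $G$ has no edges between the two mirror pairs, the union of the edge sets of the two copies recovers all nine edges of $G$.

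Next, I would factor $\Lambda_H^\e\mu$ so as to separate the single core-core edge from the four edges that touch a mirror vertex. Setting
\[
F(y_{123},y_{13},x_2):=\int \sigma_t^\e(y_{123}-x_1)\,\sigma_t^\e(y_{13}-x_1)\,\sigma_t^\e(x_1-y_{12})\,\sigma_t^\e(x_2-y_{12})\,d\mu(x_1)\,d\mu(y_{12}),
\]
we have $\Lambda_H^\e\mu = \int \sigma_t^\e(y_{123}-x_2)\,F(y_{123},y_{13},x_2)\,d\mu^3$, where $d\mu^3$ denotes integration in the core variables $y_{123}$, $y_{13}$, $x_2$. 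Applying Cauchy--Schwarz with respect to the measure $d\nu:=\sigma_t^\e(y_{123}-x_2)\,d\mu^3$, whose total mass satisfies $\nu(1)\approx 1$ by the hypothesis $\sigma_t^\e*\mu\approx 1$ on $\supp\mu$, yields
\[
(\Lambda_H^\e\mu)^2 \;\lesssim\; \int F(y_{123},y_{13},x_2)^2\,\sigma_t^\e(y_{123}-x_2)\,d\mu^3.
\]
Expanding $F^2$ introduces a duplicate set of mirror integration variables, which I would relabel as $(x_3,y_{23})$; the nine resulting $\sigma_t^\e$ factors then match, edge for edge, the nine edges of $G$ identified above, with integration over the seven variables $y_{123},y_{13},x_2,x_1,y_{12},x_3,y_{23}$. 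The integrand coincides exactly with the one defining $\Lambda_G^\e\mu$.

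The main obstacle will be keeping track of the non-degeneracy set $N_c$, since a naive Cauchy--Schwarz produces an integral over all of $(\R^d)^7$ rather than over $N_c$. I would resolve this by ordering the vertices of $G$ with the core first, then the mirror 1 pair, then the mirror 2 pair. Because mirror 1 and mirror 2 share no edges in $G$, every non-degeneracy condition in $N_c$ for a mirror 2 vertex involves only core neighbors, matching under the isomorphism the analogous condition for the corresponding mirror 1 vertex. Consequently, if both five-tuples $(y_{123},y_{13},x_2,x_1,y_{12})$ and $(y_{123},y_{13},x_2,x_3,y_{23})$ satisfy the $N_c$ conditions for $H$, then the assembled seven-tuple satisfies the $N_c$ conditions for $G$, and the Cauchy--Schwarz step may be carried out with the non-degeneracy restriction intact on both sides.
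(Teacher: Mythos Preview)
Your proposal is correct and follows essentially the same route as the paper: the same three-vertex core $\{y_{123},y_{13},x_2\}$ with mirror pairs $\{x_1,y_{12}\}$ and $\{x_3,y_{23}\}$, the same Cauchy--Schwarz against the measure $d\nu=\sigma_t^\e(y_{123}-x_2)\,d\mu^3$ (whose total mass is $\approx 1$ by hypothesis), and the same product-structure observation for $N_c$. Your containment $N_{G,c}\supseteq\{(\text{core},m_1,m_2):(\text{core},m_i)\in N_{H,c}\}$ is exactly the direction needed for the lower bound, and your justification via the vertex ordering is the same as the paper's.
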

\begin{proof}
Note that the definition of $\Lambda_{G,t,c}^\e$ depends on the ``non-degenerate set'' $N_c$ defined in (\ref{Nc}), and this in turn depends on the underlying graph.  We will denote by $N_{G,c}$ and $N_{H,c}$ the sets corresponding to the graphs $G$ and $H$, respectively.  The symmetry described above yields the following relation between the two sets:
\[
N_{G,c}=\{(x_0,\dots,x_6)\in E^7: (x_0,x_1,x_2,x_3,x_4)\in N_{H,c} \text{ and } (x_0,x_1,x_2,x_5,x_6)\in N_{H,c}\}.
\]
For fixed $x_0,x_1,x_2$, let 
\[
N_c(x_0,x_1,x_2)=\{(x_3,x_4)\in E^2: (x_0,\dots,x_4)\in N_{H,c}\}.
\]
The relation can then be written
\[
N_{G,c}=\{(x_0,\dots,x_6)\in E^7: (x_3,x_4)\in N_c(x_0,x_1,x_2) \text{ and } (x_5,x_6)\in N_c(x_0,x_1,x_2)\}.
\]
Finally, define a measure $\nu_t^\e$ by
\[
d\nu_t^\e(x_0,x_1,x_2)=\sigma_t^\e(x_1-x_0)d\mu(x_0)d\mu(x_1)d\mu(x_2),
\]
and define a function $f_t^\e$ by
\[
f_t^\e(x_0,\dots,x_4)=\sigma_t^\e(x_3-x_0)\sigma_t^\e(x_1-x_0)\sigma_t^\e(x_4-x_3)\sigma_t^\e(x_4-x_1).
\]
We have
\[
\Lambda_{G,t,c}^\e\mu=\int\left(\int\int_{N_c(x_0,x_1,x_2)} f_t^\e(x_0,\dots,x_4)d\mu(x_3)d\mu(x_4)\right)^2d\nu_t^\e(x_0,x_1,x_2).
\]
and
\[
\Lambda_{H,t,c}^\e\mu=\int\int\int_{(x_3,x_4)\in N_c(x_0,x_1,x_2)} f_t^\e(x_0,\dots,x_4)d\mu(x_3)d\mu(x_4)d\nu_t^\e(x_0,x_1,x_2).
\]
Since $\sigma_t^\e*\mu\approx 1$ on the support of $\mu$, and since $\mu$ is a probability measure, we conclude $\lambda(\R^3)\approx 1$.  Therefore, by Cauchy-Schwarz, we have
\begin{align*}
\left(\Lambda_{H,t,c}^\e\mu\right)^2&=\left(\int\int\int_{(x_3,x_4)\in N_c(x_0,x_1,x_2)} f_t^\e(x_0,\dots,x_4)d\mu(x_3)d\mu(x_4)d\nu_t^\e(x_0,x_1,x_2)\right)^2 \\
&\leq \left(\int d\nu_t^\e(x_0,x_1,x_2)\right)\left(\int\left(\int\int_{N_c(x_0,x_1,x_2)} f_t^\e(x_0,\dots,x_4)d\mu(x_3)d\mu(x_4)\right)^2d\nu_t^\e(x_0,x_1,x_2)\right) \\
&\approx \Lambda_{G,t,c}^\e\mu
\end{align*}
\end{proof}
\begin{lemma}[Lower bound for $4$-cycle]
\label{lowerbound4cycle}
Let $\Gamma$ denote the $4$-cycle graph; that is, the graph with vertices $\{0,1,2,3\}$ and adjacency relation $0\sim 1\sim 3\sim 2\sim 0$.  Let $\mu$ be a compactly supported Frostman probability measure on $\R^d$ with exponent $s>1$, and let $t,\e>0$ 
.  There exists $c>0$ (depending on the support of $\mu$) such that, if $\e$ is sufficiently small (depending on $c$), then
\[
\Lambda_{\Gamma,t,c}^\e\mu\gtrsim_c 1.
\]
\end{lemma}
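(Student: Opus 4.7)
The plan is to exploit the bipartite structure of $\Gamma$, whose vertex set partitions as $\{0,3\}\sqcup\{1,2\}$ with every edge crossing the partition, by integrating out the ``source'' vertices $x_0$ and $x_3$ and applying Cauchy-Schwarz in $x_1,x_2$. Set
\[
F_\e(x_1,x_2):=\int\sigma_t^\e(x_0-x_1)\sigma_t^\e(x_0-x_2)\,d\mu(x_0).
\]
Dropping the non-degeneracy cutoff $\mathbf{1}_{N_c}$ momentarily, the $4$-cycle integral equals $\int F_\e(x_1,x_2)^2\,d\mu(x_1)d\mu(x_2)$, and Cauchy-Schwarz against the probability measure $d\mu\otimes d\mu$ gives
\[
\int F_\e^2\,d\mu(x_1)d\mu(x_2)\ \geq\ \left(\int F_\e\,d\mu(x_1)d\mu(x_2)\right)^2\ =\ \left(\int(\sigma_t^\e*\mu)^2(x_0)\,d\mu(x_0)\right)^2.
\]
In the regime in which the lemma is used (by Proposition \ref{IT19}, applied under the stronger dimension hypotheses of Theorem \ref{main} and for $t$ in the interval $I$ produced there), $\sigma_t^\e*\mu\approx 1$ on a set of positive $\mu$-measure, which forces the right-hand side to be bounded below by a positive constant uniformly in $\e$.

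The next step is to install the non-degeneracy cutoff. With the vertex ordering $0,1,2,3$, the only non-trivial constraint is $\text{dist}(x_3,A_3(x_1,x_2))>c$, where $A_3(x_1,x_2)$ is the line through $x_1$ and $x_2$. The elementary spherical geometry from Figure \ref{triangle} shows that any $x_3$ with $|x_3-x_1|=|x_3-x_2|=t$ lies at distance exactly $\sqrt{t^2-|x_1-x_2|^2/4}$ from this line. So, up to an $O(\e)$ smoothing, the $x_3$-constraint reduces to the $(x_1,x_2)$-constraint $|x_1-x_2|<2\sqrt{t^2-c^2}$, and the ``degenerate'' portion of the integral is supported in the thin annulus where $|x_1-x_2|\in(2\sqrt{t^2-c^2},2t+O(\e))$, of radial width $O(c^2/t)$.

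The main obstacle is to bound the degenerate contribution by a quantity that tends to zero as $c\to 0$, uniformly in $\e$. Here I would use $s>1$: the $c$-neighborhood of any line in $\R^d$, restricted to the compact support of $\mu$, is covered by $O(1)$ balls of radius $c$, so has $\mu$-measure $\lesssim c^{s-1}$ by the Frostman hypothesis. Combined with the $L^\infty$ bound $\sigma_t^\e*\mu\lesssim 1$ (also from Proposition \ref{IT19}) and the observation that in the degenerate regime both $x_0$ and $x_3$ are forced to lie within $O(c)$ of the midpoint of $x_1,x_2$, I expect Fubini/Cauchy-Schwarz manipulations of the four-fold integral to yield a bound of the form $O(c^\alpha)$ for some $\alpha>0$ on the degenerate contribution, independent of $\e$. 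Choosing $c$ small enough that this quantity is strictly less than half the Cauchy-Schwarz lower bound from the first step then produces $\Lambda_{\Gamma,t,c}^\e\mu\gtrsim_c 1$.
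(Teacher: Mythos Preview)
Your Cauchy--Schwarz step and the reduction of the $x_3$-constraint to the $(x_1,x_2)$-annulus $|x_1-x_2|\in[2\sqrt{t^2-c^2},2t+O(\e)]$ are both correct, and they mirror the paper's use of the bipartite structure of $\Gamma$. The gap is in your final paragraph: you assert that the degenerate contribution is $O(c^\alpha)$ \emph{uniformly in $\e$}, but this is exactly the hard part, and the sketch you give does not deliver it. Knowing that $x_0,x_3$ lie in a ball of radius $O(c)$ about the midpoint gives Frostman mass $\lesssim c^s$ for the $x_0$- and $x_3$-integrals \emph{after} stripping the $\sigma_t^\e$ factors, but each such factor carries an $L^\infty$ cost of $\e^{-1}$. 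A direct bound of the degenerate piece therefore looks like $\e^{-2}c^s$ (or worse), which is useless as $\e\to 0$. Replacing $L^\infty$ bounds by $\sigma_t^\e*\mu\lesssim 1$ only absorbs one $\sigma$ per vertex, and you have two $\sigma$'s meeting at each of $x_0,x_3$. I do not see a Fubini/Cauchy--Schwarz manipulation that closes this gap under the hypothesis $s>1$ alone; getting $\e$-uniform control of this piece is essentially as hard as the upper-bound lemmas in Section~\ref{4cyclesection}, which require much stronger thresholds on $s$.

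The paper avoids this subtraction entirely. Rather than bounding the bad region, it manufactures a good one: cover $E$ by tubes of diameter $c$ parallel to a fixed axis, pick one tube $T$ with $\mu(T)>0$, and set $A=E\cap T$, $B=E\setminus 2T$. Restricting $x_1,x_2\in A$ and $x_0,x_3\in B$ forces $(x_0,x_1,x_2,x_3)\in N_c$ by elementary geometry (the midpoint of $x_1,x_2$ lies in $T$, and $x_3\in B$ is at least $\sim c$ away). One then runs exactly your Cauchy--Schwarz on $\mu_A\otimes\mu_A$ with the inner integral against $\mu_B$. The role of $s>1$ is not to bound a degenerate integral but to ensure (i) $\mu(2T)\lesssim c^{s-1}<1$, so $B$ carries positive mass, and (ii) for $x\in A$, $\sigma_t^\e*\mu|_{2T}(x)\lesssim c^{-(d-1)}\e^{s-1}\to 0$, so that $\sigma_t^\e*\mu_B\gtrsim\sigma_t^\e*\mu - o(1)\gtrsim 1$ on $A$. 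This sidesteps the $\e$-uniformity problem completely; I recommend reworking your argument along these lines.
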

\begin{proof}
Let $E\subset \R^d$ be the support of $\mu$.  Consider a covering of $E$ by a finite collection of tubes parallel to the $x_1$-axis of diameter $c$ and finite length.  There must be at least one tube $T$ such that $\mu(E\cap T)>0$.  Let $2T$ denote the tube with the same axis and diameter $2c$.  Since $2T$ can be covered by $O(c^{-1})$ balls of radius $c$, we also have $\mu(2T)\lesssim c^{s-1}$.  Since $s>1$, if $c$ is sufficiently small, we have $\mu(2T)<1$.  Let $A=E\cap T$, let $B=E\setminus 2T$, and let $\mu_A,\mu_B$ the the restrictions of $\mu$ to $A$ and $B$, respectively, and normalized to be probability measures.  If $N_c$ is the non-degeneracy set corresponding to the graph $\Gamma$, then we have
\[
N_c\supset \{(x_0,x_1,x_2,x_3):x_1,x_2\in A,x_0,x_3\in B\},
\]
hence
\begin{align*}
\Lambda_{\Gamma,t,c}^\e\mu&\gtrsim_c \int\int\int\int \sigma_t^\e(x_1-x_0)\sigma_t^\e(x_2-x_0)\sigma_t^\e(x_3-x_1)\sigma_t^\e(x_3-x_2)d\mu_B(x_0)d\mu_A(x_1)d\mu_A(x_2)d\mu_B(x_3) \\
&=\int\int \left(\int \sigma_t^\e(x_1-x_0)\sigma_t^\e(x_2-x_0)d\mu_B(x_0)\right)^2d\mu_A(x_1)d\mu_A(x_2).
\end{align*}
Next, observe that for any $x\in A$, the support of the function $y\mapsto \sigma_t^\e(x-y)$ can be covered by $c^{d-1}$-many balls of radius $\e$, and hence has measure $\lesssim c^{-(d-1)}\e^s$.  Therefore, for each $x\in A$, we have
\[
\sigma_t^\e*\mu|_{2T}(x)=\int_{2T}\sigma_t^\e(x-y)d\mu(y)\lesssim c^{-(d-1)}\e^{s-1},
\]
which can be made arbitrarily small by taking $\e$ small (depending on $c$).  Since
\[
\sigma_t^\e*\mu_B>\sigma_t^\e*\mu|_B=\sigma_t^\e*\mu-\sigma_t^\e*\mu|_{2T},
\]
we conclude that $\sigma_t^\e*\mu_B(x)\gtrsim 1$ for all $x\in A$, provided $\e$ is sufficiently small.  This, together with Cauch-Schwarz, gives 
\begin{align*}
1&\lesssim \left(\int \sigma_t^\e*\mu_B(x_0)^2d\mu_A(x_0)\right)^2 \\
&=\left(\int\int\int \sigma_t^\e(x_1-x_0)\sigma_t^\e(x_2-x_0)d\mu_B(x_0)d\mu_A(x_1)d\mu_A(x_2)\right)^2 \\
&\leq \left(\int\int d\mu_A(x_1)d\mu_A(x_2)\right)\left(\int\int \left(\int \sigma_t^\e(x_1-x_0)\sigma_t^\e(x_2-x_0)d\mu_B(x_0)\right)^2d\mu_A(x_1)d\mu_A(x_2)\right) \\
&\lesssim_c \Lambda_{\Gamma,t,c}^\e\mu.
\end{align*}
\end{proof}
\begin{lemma}
\label{mainlowerbound}
Let $d\geq 2$, and let $E\subset \R^d$ be a compact set satisfying $\dim E>\frac{d+1}{2}$.  For any $\frac{d+1}{2}<s<\dim E$ there exists a Frostman probability measure $\mu$ of exponent $s$ supported on a subset of $E$, a constant $c>0$, and an interval $I\subset (0,\infty)$ such that for all $t\in I$ and $\e>0$, we have
\[
\Lambda_{\mathcal{G}_3,t,c}^\e\mu \gtrsim_c 1.
\]
\begin{proof}
Let $\mu_0$ be any Frostman measure of exponent $s$.  By Lemma \ref{IT19}, there is an interval $I$ and a subset of $E$ on which $\sigma_t^\e*\mu\approx 1$ for all $t\in I$.  Let $\mu$ be the restriction of $\mu_0$ to this set, normalized to be a probability measure.  By Lemmas \ref{deforestation}, \ref{lowerboundGtoH}, and \ref{lowerbound4cycle} we have
\[
\Lambda_{\mathcal{G}_3,t,c}^\e \mu\approx \Lambda_{G,t,c}^\e\gtrsim \Lambda_{H,t,c}^\e\approx \Lambda_{\Gamma,t,c}^\e\gtrsim_c 1,
\]
where the graphs $G,H,\Gamma$ are as in the statement of those lemmas.
\end{proof}
\end{lemma}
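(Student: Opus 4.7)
The plan is to chain together the three lemmas established earlier in this subsection, stripping the graph $\mathcal{G}_3$ down to the $4$-cycle in stages. To set things up, I would fix any exponent $s$ with $\frac{d+1}{2} < s < \dim E$, choose an ambient Frostman probability measure $\mu_0$ on $E$ with exponent $s$, and then invoke Proposition \ref{IT19} to produce a subset $E_0 \subset E$ of positive $\mu_0$-mass and a non-degenerate interval $I$ on which $\sigma_t^\e * \mu_0 \approx 1$ uniformly for all sufficiently small $\e$. Let $\mu$ be the normalized restriction of $\mu_0$ to $E_0$; this inherits the Frostman bound at exponent $s$ and still satisfies $\sigma_t^\e * \mu \approx 1$ on its support for $t \in I$.

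With $\mu$ in hand, the first move is to apply Lemma \ref{deforestation} to the graph $\mathcal{G}_3$. The degree-$0$ vertex $y_\varnothing$ and the degree-$1$ vertices $y_1, y_2, y_3$ are exactly the vertices the deforestation lemma removes, and the hypothesis $\sigma_t^\e * \mu \approx 1$ is precisely what I just arranged, so this yields $\Lambda_{\mathcal{G}_3,t,c}^\e\mu \approx \Lambda_{G,t,c}^\e\mu$.

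Next, I would apply Lemma \ref{lowerboundGtoH} to exploit the symmetry of $G$ as two copies of $H$ glued along $\{x_1,x_2,y_{123}\}$, giving $\Lambda_{G,t,c}^\e\mu \gtrsim (\Lambda_{H,t,c}^\e\mu)^2$. One more application of the deforestation lemma, this time to $H$, strips off the degree-$1$ vertex $y_{13}$ and leaves the $4$-cycle $\Gamma$ on $\{x_1, x_2, y_{12}, y_{123}\}$, producing $\Lambda_{H,t,c}^\e\mu \approx \Lambda_{\Gamma,t,c}^\e\mu$. Finally, Lemma \ref{lowerbound4cycle} furnishes a constant $c > 0$ (depending on the support of $\mu$) such that $\Lambda_{\Gamma,t,c}^\e\mu \gtrsim_c 1$ once $\e$ is sufficiently small, and concatenating gives
\[
\Lambda_{\mathcal{G}_3,t,c}^\e\mu \;\approx\; \Lambda_{G,t,c}^\e\mu \;\gtrsim\; \bigl(\Lambda_{H,t,c}^\e\mu\bigr)^2 \;\approx\; \bigl(\Lambda_{\Gamma,t,c}^\e\mu\bigr)^2 \;\gtrsim_c\; 1,
\]
as desired.

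No single step looks hard in isolation; the main bookkeeping issue is to thread a \emph{single} $c > 0$ through the argument so that the non-degeneracy parameter used in both applications of deforestation and in the $4$-cycle lemma is consistent. Since all three lemmas remain valid upon shrinking $c$, it suffices to take $c$ below the minimum of the thresholds demanded by each. I would also note that the factor of $2$ picked up in exponents at the $G \to H$ squaring step is harmless: a quantity bounded below by a constant depending on $c$ stays bounded below when squared. The requirement that $\e$ be small is the one present in Lemma \ref{lowerbound4cycle}, and the statement is to be read in that regime (which is the only regime relevant for taking $\e \to 0$ in Lemma \ref{Borelmeasurelemma}).
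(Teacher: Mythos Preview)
Your proposal is correct and follows essentially the same route as the paper's proof: restrict to the good set from Proposition~\ref{IT19}, deforest $\mathcal{G}_3$ to $G$, apply the Cauchy--Schwarz reduction from $G$ to $H$, deforest $H$ to the $4$-cycle $\Gamma$, and finish with Lemma~\ref{lowerbound4cycle}. Your chain $\Lambda_{\mathcal{G}_3}^\e\approx\Lambda_G^\e\gtrsim(\Lambda_H^\e)^2\approx(\Lambda_\Gamma^\e)^2\gtrsim_c 1$ is in fact slightly more accurate than the paper's display (which suppresses the squares), and your remarks on threading a single $c$ and on the harmlessness of the squaring are apt bookkeeping that the paper leaves implicit.
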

\subsection{Proofs of Theorems}
\label{proofmt}
The main work in proving our theorems is contained in the upper and lower bounds for the various operators studied in this section.  What remains is to show that the resulting configurations are not ``degenerate'', in the sense that we do not have points which are supposed to be distinct but are not, or that we do not have distance $t$ between pairs of points which do not correspond to an edge of our graph.  The following calculation will be needed to handle the case $d=3$.
\begin{lemma}
\label{smallballconvolution}
Let $t,\e>0$, let $d\geq 3$, and let $B\subset \R^d$ be a ball of radius $\e$.  Let $\mu$ be a compactly supported Frostman probability measure such that the $s$-energy (\ref{energy}) is finite.  If $s>\frac{d+5}{3}$, then there exists $\delta>0$ such that for any $x\in \R^d$, we have
\[
\int_B \sigma_t^\e(x-y)d\mu(y)\lesssim \e^{2+\delta}.
\]
\end{lemma}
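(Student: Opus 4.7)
The plan is to estimate $I(x) := \int_B \sigma_t^\e(x-y)\,d\mu(y) = \sigma_t^\e*\mu_B(x)$, where $\mu_B = \chi_B \mu$ is the restriction of $\mu$ to $B$ (smoothed against a bump if convenient), via Fourier inversion combined with the $s$-energy of $\mu$. Writing
\[
|I(x)| \le \int |\widehat{\sigma_t^\e}(\xi)|\,|\widehat{\mu_B}(\xi)|\,d\xi,
\]
the estimate will exploit three inputs: (i) the spherical decay $|\widehat{\sigma_t^\e}(\xi)| \lesssim (1+|\xi|)^{-(d-1)/2}$ with effective support in $|\xi| \lesssim \e^{-1}$ (from the mollification by $\f^\e$); (ii) the pointwise bound $|\widehat{\mu_B}(\xi)| \le \mu(B) \lesssim \e^s$; and (iii) the convolution identity $\widehat{\mu_B} = \widehat{\chi_B}*\widehat{\mu}$, which allows an $L^2$-type control of $\widehat{\mu_B}$ on a window of size $\e^{-1}$ against $\int |\widehat{\mu}|^2(1+|\xi|)^{-(d-s)}d\xi \lesssim I_s(\mu) \lesssim 1$.

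A direct Cauchy--Schwarz with the weight $(1+|\xi|)^{d-s}$ only delivers the weaker threshold $s > (d+5)/2$. To reach the sharper threshold $s > (d+5)/3$, I would carry out a dyadic frequency decomposition: split at scales $2^k$ for $0 \le k \lesssim \log_2(\e^{-1})$, and on each dyadic shell $|\xi| \sim 2^k$ bound $|\widehat{\mu_B}(\xi)|$ using the Cauchy--Schwarz estimate
\[
|\widehat{\mu_B}(\xi)|^2 \lesssim \e^d \int_{|\eta - \xi| \lesssim \e^{-1}} |\widehat{\mu}(\eta)|^2\,d\eta,
\]
obtained from $\widehat{\mu_B} = \widehat{\chi_B}*\widehat{\mu}$ using $\int |\widehat{\chi_B}| \lesssim 1$ and $\|\widehat{\chi_B}\|_\infty \lesssim \e^d$. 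Combined with the $s$-energy of $\mu$, this gives an interpolation between the trivial bound $\e^s$ (valid at low frequencies) and the $s$-energy average $\e^{d/2}|\xi|^{(d-s)/2}$ (useful at high frequencies), which is sharper than either bound in isolation.

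The main obstacle will be carrying out this dyadic summation carefully so as to extract the exponent $\e^{2+\delta}$ precisely when $s > (d+5)/3$. The intermediate frequency range $1 \ll |\xi| \ll \e^{-1}$ is where the trivial bound on $\widehat{\mu_B}$ is too weak and the plain $s$-energy bound loses a factor of $\e^{-(d-s+1)/2}$; controlling this range by exploiting both the localization of $\mu_B$ (which contributes the $\e^d$ factor from $\widehat{\chi_B}$) and the averaged Fourier decay of $\widehat{\mu}$ is the essential step. Once the correct balance is struck across the dyadic shells, the dependence on $\e$ combines with the Fourier decay of $\widehat{\sigma_t^\e}$ to yield $I(x) \lesssim \e^{2+\delta}$ with $\delta = \delta(s,d) > 0$ for $s > (d+5)/3$.
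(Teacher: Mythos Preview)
Your high-level plan---Fourier inversion, the decay $|\widehat{\sigma_t^\e}(\xi)|\lesssim(1+|\xi|)^{-(d-1)/2}$, and the $s$-energy---is the same as the paper's. The paper, however, organizes the crucial step differently. It \emph{normalizes}: setting $\nu=\mu|_B/\mu(B)$, it pulls out the factor $\mu(B)\lesssim\e^s$ to write
\[
\int_B \sigma_t^\e(x-y)\,d\mu(y)\lesssim \e^s\int|\widehat{\f}(\e\xi)|\,(1+|\xi|)^{-(d-1)/2}\,|\widehat{\nu}(\xi)|\,d\xi,
\]
splits at $|\xi|=\e^{-\alpha}$ with $\alpha$ just above $1$, and on the middle range $1<|\xi|<\e^{-\alpha}$ applies Cauchy--Schwarz against the $s$-energy of the \emph{normalized} measure $\nu$, obtaining the contribution $\e^{s-\alpha(d-s+1)/2}$. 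With $\alpha\downarrow 1$ this exceeds $\e^2$ exactly when $s>(d+5)/3$. The high range $|\xi|>\e^{-\alpha}$ is handled by the Schwartz decay of $\widehat{\f}$.

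Your proposed route through $\widehat{\mu_B}=\widehat{\chi_B}*\widehat{\mu}$ and the averaged bound
\[
|\widehat{\mu_B}(\xi)|^2\lesssim \e^d\int_{|\eta-\xi|\lesssim\e^{-1}}|\widehat{\mu}(\eta)|^2\,d\eta
\]
does not, as stated, reach $(d+5)/3$. The only frequencies that matter are $|\xi|\lesssim\e^{-1}$, since $\widehat{\sigma_t^\e}$ is negligible beyond. In that range the window $\{|\eta-\xi|\lesssim\e^{-1}\}$ lies inside $\{|\eta|\lesssim\e^{-1}\}$, and the energy condition gives only $\int_{|\eta|\lesssim\e^{-1}}|\widehat{\mu}|^2\lesssim\e^{-(d-s)}$; your pointwise bound therefore yields $|\widehat{\mu_B}(\xi)|\lesssim\e^{s/2}$, which is \emph{worse} than the trivial $|\widehat{\mu_B}|\le\mu(B)\lesssim\e^s$. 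So the ``interpolation'' between $\e^s$ and $\e^{d/2}|\xi|^{(d-s)/2}$ never improves on the first endpoint in the relevant range, and the dyadic sum collapses to $\e^{s-(d+1)/2}$, i.e.\ the threshold $s>(d+5)/2$ you already flagged. For $d=3$---the only dimension where the lemma is not immediate from the trivial bound $\e^{s-1}$---this is vacuous.

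The idea you are missing is precisely the paper's normalization: one needs the $\e^s$ gain from $\mu(B)$ to sit \emph{outside} the Cauchy--Schwarz, so that the energy integral on the inside contributes only a loss of $\e^{-(d-s+1)/2}$ rather than $\e^{-(d+1)/2}$. Taking the minimum (or any convex combination) of the two separate bounds for $\widehat{\mu_B}$ does not achieve this; working with $\widehat{\nu}$ does.
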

\begin{remark}
The integral could be bounded trivially by $\e^{-1+s}$.  If $d\geq 4$, then in application we will always have $s>3$, and the result follows immediately.  The lemma is needed to handle the $d=3$ case.
\end{remark}
\begin{proof}
If $\mu(B)=0$, there is nothing to prove.  Otherwise, define
\[
\nu=\frac{\mu|_B}{\mu(B)}.
\]
Then, $\nu$ is a Frostman probability measure of exponent $s$ supported on $B$, and
\[
\int_B \sigma_t^\e(x-y)d\mu(y)=\int \sigma_t^\e(x-y)d\mu|_B(y)\lesssim \e^s\int \sigma_t^\e(x-y)d\nu(y).
\] 
Recall $\sigma_t^\e=\f_\e*\sigma_t$.  By Plancherel, we get
\begin{equation}
\label{plancherelbound}
\int_B \sigma_t^\e(x-y)d\mu(y)\lesssim \e^s\int \widehat{\f}(\e\xi)(1+|\xi|)^{-\frac{d-1}{2}}\widehat{\nu}(\xi)d\xi.
\end{equation}
We estimate the integral on the right in the ranges $0<|\xi|<1$, $1<|\xi|<\e^{-\alpha}$, and $|\xi|>\e^{-\alpha}$ separately, where $\alpha>1$ is a parameter to be determined later.  In the first range, the integral is clearly bounded, and the factor of $\e^s$ on the right hand side of (\ref{plancherelbound}) is better than the bound claimed in the theorem.  For the second range, we have
\begin{align*}
\int_{1<|\xi|<\e^{-\alpha}} \widehat{\f}(\e\xi)(1+|\xi|)^{-\frac{d-1}{2}}\widehat{\nu}(\xi)d\xi&\lesssim \int_{1<|\xi|<\e^{-\alpha}} |\xi|^{-\frac{d-1}{2}}|\widehat{\nu}(\xi)|d\xi \\
&=\int_{1<|\xi|<\e^{-\alpha}} |\widehat{\nu}(\xi)||\xi|^{-\frac{(d-s)}{2}}|\xi|^{\frac{d-s}{2}-\frac{d-1}{2}}d\xi \\
&\leq\left(\int_{1<|\xi|<\e^{-\alpha}} |\widehat{\nu}(\xi)|^2|\xi|^{-(d-s)}d\xi\right)^{1/2}\left(\int_{1<|\xi|<\e^{-\alpha}} |\xi|^{-(s-1)}\widehat{\nu}(\xi)d\xi\right)^{1/2} \\
&\lesssim \e^{-\frac{\alpha(d-s+1)}{2}}.
\end{align*}
Plugging this into (\ref{plancherelbound}) gives the claimed bound.  For the third range, we use the fact that $\widehat{\f}$ is Schwarz, and therefore satisfies $|\widehat{\f}(\xi)\lesssim_N |\xi|^{-N}$ for any $N$.  We have
\begin{align*}
\int_{|\xi|>\e^{-\alpha}} \widehat{\f}(\e\xi)(1+|\xi|)^{-\frac{d-1}{2}}\widehat{\nu}(\xi)d\xi&\lesssim_N \e^{-N}\int_{|\xi|>\e^{-\alpha}} |\xi|^{-N-\frac{d-1}{2}}d\xi \\
&=\e^{-N}(\e^{-\alpha})^{-N+\frac{d+1}{2}+1}.
\end{align*}
Combining each estimate with (\ref{plancherelbound}), we get
\[
\int_B \sigma_t^\e(x-y)d\mu(y)\lesssim_\alpha \e^s+\e^{s-\frac{\alpha(d-s+1)}{2}}+\e^{s+(\alpha-1)N-\alpha(\frac{d+1}{2}+1)}.
\]
Since $s>2$, the first bound is sufficient.  We will have $s-\frac{\alpha(d-s+1)}{2}>2$ provided $s>\frac{d+5}{3}$ and $\alpha$ sufficiently close to $1$.  Finally, the exponent in the third term can be made as large as desired by choosing $N$ sufficiently large (depending on $\alpha$ and $d$).
\end{proof}
To complete the proof of our main theorem (Theorem \ref{main}), it will be convenient to return to our original notation for $\mathcal{G}_3$: the vertex set is $\{1,2,3\}\cup \mathcal{P}(\{1,2,3\})$, and there is an edge between $i\in \{1,2,3\}$ and $I\subset \{1,2,3\}$ if and only if $i\in I$.  When embedding the graph in $\R^d$, we will always denote the points corresponding to $i\in\{1,2,3\}$ by $x_i$ and those corresponding to $I\subset \{1,2,3\}$ by $y_I$.  To simplify notation, we will write things like $y_{12}$ instead of $y_{\{1,2\}}$.
\begin{proof}[Proof of Theorem \ref{main}]
Let $\mu$ be a Frostman measure supported on $E$ with exponent $s>s_d$.  Passing to a subset if necessary (by Lemma \ref{IT19}), we may assume without loss of generality that there exists an interval $I\subset (0,\infty)$ such that for all $t\in I$ and all $\e>0$, we have $\sigma_t^\e*\mu\approx 1$ on $E$.  For any such $t$ we have, by Lemmas \ref{deforestation} and \ref{mainupperbound}, the bound $\Lambda_{\mathcal{G}_3,t,c}^\e\lesssim 1$ for all $c,\e>0$.  By Lemma \ref{mainlowerbound}, we have $\Lambda_{\mathcal{G}_3,t,c}^\e \gtrsim_c 1$ for some fixed $c>0$.  By Lemma \ref{Borelmeasurelemma}, 
the measures $\Lambda_{\mathcal{G}_3,t,c}^\e\mu$ converge to a weak* limit $\Lambda$.  In order to show that the VC-dimension of $\mathcal{H}_t(E)$ is at least $3$, we must show that there exists a set of points
\[
\{x_i:i\in\{1,2,3\}\}\cup\{y_I:I\subset \{1,2,3\}\}\subset E
\]
such that:
\begin{enumerate}[(i)]
\item $|x_i-y_I|=t$ if $i\in I$,
\item $|x_i-y_I|\neq t$ if $i\notin I$,
\item The points $x_i,y_I$ are all distinct.
\end{enumerate}
Since $\Lambda$ is a probability measure supported on
\[
\{(x_i,y_I)_{i,I}:|x_i-y_I|=t \text{ if } i\in I\},
\]
it is immediate that there exist points $\{x_i,y_I\}_{i,I}$ satisfying (i).  To complete the proof, we show that the set of points failing to satisfy (ii) or (iii) has measure zero.  Recall that by definition, for any set of configurations $A$, we have
\[
\Lambda\mu(A)\approx \int\cdots\int_{N_c\cap A_\e} \left(\prod_{i\in I} \sigma_t^\e(x_i-y_I)\right)d\mu(x_1)\cdots d\mu(y_{123}).
\]
Since $\sigma_t^\e*\mu\approx 1$ on $E$, we can immediately integrate out all degree $0$ and $1$ vertices.  Henceforth, we will assume that the integral above is over only the points $x_1,x_2,x_3,y_{12},y_{23},y_{13},y_{123}$.  

First, let $A$ be the set of configurations for which $x_1=x_2$.  We can use $L^\infty$ bounds on the terms $\sigma_t^\e(x_1-y_{12})$ and $\sigma_t^\e(x_1-y_{13})$, giving
\[
\Lambda\mu(A)\lesssim \e^{-2}\int\cdots\int \left(\prod_{i\in I, i\neq 1} \sigma_t^\e(x_i-y_I)\right)\left(\int_{|x_1-x_2|<\e}\sigma_t^\e(x_1-y_{123})d\mu(x_1)\right)d\mu(x_2)\cdots d\mu(y_{123}).
\]
Letting $\delta$ be as in Lemma \ref{smallballconvolution}, this is
\[
\Lambda\mu(A)\lesssim \e^\delta \int\cdots\int \left(\prod_{i\in I, i\neq 1} \sigma_t^\e(x_i-y_I)\right)d\mu(x_2)\cdots d\mu(y_{123}).
\]
After applying Lemma \ref{deforestation}, the integral on the right reduces to the configuration integral for the $4$-cycle $\Gamma$, which is bounded by Lemma \ref{4cycleupperbound}.  Any of the degeneracies $x_i=x_j$ or $x_i=y_I$ can be handled the same way.  Degeneracies of the form $y_I=y_J$ can also be handled similarly; the only change is that after the vertex is eliminated, the resulting integral is the configuration integral for the graph $B$ in Lemma \ref{Bupperbound} (assuming the point deleted is not $y_{123}$), which is bounded by that lemma.  The case of $y_{123}$ can be ignored, as one may always choose to delete whatever vertex it coincides with, reducing to one of the cases above.  This shows that the set of configurations failing condition (iii) has measure zero; it remains to handle those violating (ii).  This case is similar to the above, except after a vertex is deleted we are integrating on an annulus rather than a ball.  On the other hand, we do not need to consider any degree $3$ vertices.  This is because $y_{123}$ is already distance $t$ from all of $x_1,x_2,x_3$, so there are no potential ``extra'' edges to worry about.  Let $A$ be the set of configurations where $|x_1-y_{23}|=t$.  By applying the $L^\infty$ bound to the terms $\sigma_t(x_2-y_{23})$ and $\sigma_t^\e(x_3-y_{23})$, we get
\[
\Lambda\mu(A)\lesssim\e^{-2}\int\cdots\int \left(\prod_{i\in I, I\neq \{2,3\}} \sigma_t^\e(x_i-y_I)\right)\left(\int_{\substack{||x_1-y_{23}|-t|<\e \\  ||x_2-y_{23}|-t|<\e \\ ||x_3-y_{23}|-t|<\e}}d\mu(y_{23})\right)d\mu(x_1)\cdots d\mu(y_{123}).
\]
The inner integral is over an intersection of three spheres, which we may assume are distinct since we have already shown the sets where $x_i=x_j$ are measure zero.  This surface can be covered by $\e^{-(d-3)}$-many balls of radius $\e$, and thus has measure $\lesssim \e^{-(d-3)+s}$.  When the point $y_{23}$ is deleted, the resulting integral again reduces to the one corresponding to the graph $B$, which is bounded by Lemma \ref{Bupperbound}.  The result is the bound
\[
\Lambda\mu(A)\lesssim\e^{-2}\e^{-(d-3)+s}=\e^{s-(d-1)}.
\]
Since $s>d-1$, we conclude $\Lambda\mu(A)=0$.  From this, we conclude that the set $E$ contains the necessary point configuration.
\end{proof}
\begin{proof}[Proof of Theorem \ref{4cycle}]
Let $\mu$ be a Frostman probability measure supported on $E$ of exponent $s$.  If $s$ satisfies the threshold assumed in the statement of the theorem, then it satisfies (\ref{4cycleexponent}) with $d=3$.  By Lemmas \ref{4cycleupperbound} and \ref{lowerbound4cycle}, we have $\Lambda_{\Gamma,t,c}\approx 1$ for some fixed $c>0$ and all $t$ in some interval.  Let $\Lambda\mu$ be the measure given by Lemma \ref{Borelmeasurelemma}.  The support of this measure is
\[
\{(x,y,z,w)\in E^4\cap N_c:|x-y|=|y-z|=|z-w|=|w-x|=t\}.
\]
For any configuration $(x,y,z,w)$ in the above set, the distance equations prevent the relations $x=y$ and $x=w$.  If $A$ is the set of configurations such that $x=z$, then
\begin{align*}
\Lambda\mu(A)&\approx\int\int\int\int_{|x-z|<\e}\sigma_t^\e(x-y)\sigma_t^\e(y-z)\sigma_t^\e(z-w)\sigma_t^\e(w-x)d\mu(x)d\mu(y)d\mu(z)d\mu(w) \\
&\lesssim \e^{-2}\int\int\int\sigma_t^\e(y-z)\sigma_t^\e(z-w)\left(\int_{|x-z|<\e}d\mu(x)\right)d\mu(y)d\mu(z)d\mu(w) \\
&\lesssim \e^{s-2}.
\end{align*}
Since $s>2$, we conclude $\Lambda\mu(A)=0$.
\end{proof}
\begin{proof}[Proof of Theorem \ref{mainchain}]
The graph $\mathcal{G}_2$ consists of a $4$-chain and a single isolated vertex, as in Figure \ref{shatter2}

\begin{figure}[h!]
\begin{tikzpicture}[scale=2, thick]

  \node[fill=black, circle, minimum size=4pt, inner sep=0pt, label=above :$x_2$] (x2) at (.707,.707) {};
  \node[fill=black, circle, minimum size=4pt, inner sep=0pt, label=above:$x_1$] (x1) at (-.707,.707) {};

  \node[fill=black, circle, minimum size=4pt, inner sep=0pt, label=below:$y_{12}$] (y12) at (0,0) {};

  \node[fill=black, circle, minimum size=4pt, inner sep=0pt, label=above left:$y_1$] (y1) at (-1.41,0) {};

  \node[fill=black, circle, minimum size=4pt, inner sep=0pt, label=above right:$y_2$] (y2) at (1.41,0) {};
  \node[fill=black, circle, minimum size=4pt, inner sep=0pt, label= left:$y_\varnothing$] (y0) at (-1.41,.8) {};
  \node at (0,1) {\LARGE $\mathcal{G}_2$};


  \draw (y12) -- (x1);
  \draw (y12) -- (x2);
 
    \draw (y1) -- (x1);
    \draw (y2) -- (x2);

\end{tikzpicture}
\caption{The 2-shattering graph}\label{shatter2}
\end{figure}
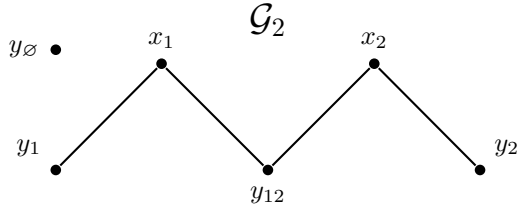

We will continue our notational conventions from the proof of Theorem \ref{main}.  Thus, we will call our points $x_1,x_2$ and $y_1,y_2,y_{12},y_\varnothing$.  The boundedness of the configuration integral of this graph follows immediately from Lemma \ref{IT19}; let $\Lambda\mu$ be the limit measure given by Lemma \ref{Borelmeasurelemma}.  To complete the proof, we need to exclude the case where some vertices coincide, or where we have extra edges.  To start, let $A$ be the set of tuples where $x_1=x_2$.  Then,
\begin{align*}
\Lambda\mu(A)&\approx \int\int\int\int\int_{|x_1-x_2|<\e}\sigma_t^\e(y_1-x_1)\sigma_t^\e(x_1-y_{12})\sigma_t^\e(y_{12}-x_2)\sigma_t^\e(x_2-y_2)d\mu(x_1)d\mu(x_2)d\mu(y_1)d\mu(y_2)d\mu(y_{12}) \\
&\approx \int\int\int_{|x_1-x_2|<\e}\sigma_t^\e(x_1-y_{12})\sigma_t^\e(y_{12}-x_2) d\mu(x_1)d\mu(x_2)d\mu(y_{12}) \\
&\e^{-1}\int\int\int_{|x_1-x_2|<\e}\sigma_t^\e(x_1-y_{12})d\mu(x_1)d\mu(x_2)d\mu(y_{12}) \\
&\approx \e^{-1}\int\int_{|x_1-x_2|<\e}d\mu(x_1)d\mu(x_2)  \\
&\lesssim \e^{s-1}.
\end{align*}
Since $s>1$, we conclude $\Lambda\mu(A)=0$.  All other cases of points coinciding are similar.  To handle possible extra edges, now consider the case where $A'$ consists of configurations with $|x_1-y_2|=t$.  We would then have
\begin{align*}
\Lambda\mu(A')&\approx \int\int\int\int\int_{||x_1-y_2|-t|<\e}\sigma_t^\e(y_1-x_1)\sigma_t^\e(x_1-y_{12})\sigma_t^\e(y_{12}-x_2)\sigma_t^\e(x_2-y_2)d\mu(x_1)d\mu(x_2)d\mu(y_1)d\mu(y_2)d\mu(y_{12}) \\
&\approx \int\int\int\sigma_t^\e(y_{12}-x_2)\sigma_t^\e(x_2-y_2)\left(\int_{||x_1-y_2|-t|<\e}\sigma_t^\e(x_1-y_{12})d\mu(x_1)\right)d\mu(x_2)d\mu(y_2)d\mu(y_{12}) \\
&\lesssim \e^{-1}\int\int\int\sigma_t^\e(y_{12}-x_2)\sigma_t^\e(x_2-y_2)\left(\int_{\substack{||x_1-y_2|-t|<\e \\ ||x_1-y_{12}|-t|<\e}}d\mu(x_1)\right)d\mu(x_2)d\mu(y_2)d\mu(y_{12}) \\
&\lesssim \e^{s-(d-1)}.
\end{align*}
If $s>d-1$, then $\Lambda\mu(A')=0$.  We conclude that $E$ contains the required configuration.
\end{proof}

\bibliographystyle{plain}
\bibliography{EuclideanVCrefs}

\end{document}